\documentclass{amsart}
\usepackage{xspace}

\def\makedef#1{\expandafter\gdef\csname#1\endcsname}
\def\mat#1{\ensuremath{#1}\xspace}
\def\dmat#1#2{\gdef#1{\mat{#2}}} 				
\def\oper#1{\makedef{#1}{\operatorname{#1}}}

\newif\ifrem\remtrue

\theoremstyle{plain}
\newtheorem{nthr}{Theorem}[section]
\newtheorem{theorem}		[nthr]{Theorem}

\newtheorem{lemma}		[nthr]{Lemma}
\newtheorem{corollary}	[nthr]{Corollary}
\newtheorem{remark}       [nthr]{Remark}

\oper{Exp}
\oper{Log}
\oper{Pow}
\oper{pow}
\oper{End}
\oper{Aut}
\oper{Hom}
\oper{GL}
\oper{SL}
\oper{PGL}
\oper{Map}
\oper{Spec}

\def\eql#1#2{\begin{equation}\label{#2}#1\end{equation}}
\def\set#1{\{#1\}}
\def\sets#1#2{\{#1\mid#2\}}
\def\rbr#1{\left(#1\right)}
\def\n#1{\left\lvert#1\right\rvert}
\def\pser#1{[\![#1]\!]} 
\def\lser#1{(\!(#1)\!)} 
\def\overst#1#2{\mat{\substack{#1\\#2}}} 

\def\bN{\mathbb{N}}
\def\bZ{\mathbb{Z}}
\def\bQ{\mathbb{Q}}
\def\bF{\mathbb{F}}
\def\bC{\mathbb{C}}
\dmat\cP{\mathcal P}
\dmat\cX{\mathcal X}

\def\de{\delta}
\dmat\la\lambda
\def\si{\sigma}
\def\ta{\tau}
\dmat\vi\varphi
\def\Ga{\Gamma}
\def\fm{\mathfrak{m}}

\def\mto{\mapsto}
\def\inv{^{-1}}
\def\xx{\times}
\def\iso{\simeq}
\def\bop{\bigoplus}
\def\irr{\mathrm{irr}}
\def\ind{\mathrm{ind}}
\def\sb{\subset}
\def\GIT{/\!\!/}

\usepackage{hyperref}

\begin{document}
\title[Character varieties]{Arithmetic of character varieties of free groups}
\author{Sergey Mozgovoy}%
\author{Markus Reineke}%
\email{mozgovoy@maths.tcd.ie}
\email{mreineke@uni-wuppertal.de}
\date{\today}
\begin{abstract}
Via counting over finite fields, we derive explicit formulas for the $E$-polynomials and Euler characteristics of $\GL_d$- and $\PGL_d$-character varieties of free groups. We prove a positivity property for these polynomials and relate them to the number of subgroups of finite index.
\end{abstract}

\maketitle

\section{Introduction and statement of the results}\label{introduction}

Given a finitely generated group $\Ga$ and a complex reductive algebraic group $G$, one can consider the character variety
$$X_\Ga(G)=\Hom(\Ga,G)\GIT G=\Spec\rbr{\bC[\Hom(\Ga,G)]^G},$$
the spectrum of the (finitely generated) ring of $G$-invariant functions on the space of representations of $\Ga$ in $G$. 
The study of these varieties, in particular for $\Ga$ the fundamental group of a manifold, is a common theme in geometry and topology. Arithmetic aspects of character varieties have been studied very fruitfully e.g.~in~\cite{hausel_mixed}.

From now on let $\Ga=F_m$ be the free group in $m\geq 0$ generators (i.e.~the fundamental group of an $(m+1)$-punctured sphere). The character varieties $X_\Ga(G)$ for the groups $G=\GL_d(\bC)$, $G=\PGL_d(\bC)$, and $G=\SL_d(\bC)$ were studied e.g.\ in \cite{florentino_topology,florentino_singularities}, and from an arithmetic point of view in \cite{cavazos_e}.
We summarize some basic geometric properties of these representation varieties. For $m\geq 2$, the variety $X_\Ga(\GL_d(\bC))$ is an irreducible affine variety of dimension $(m-1)d^2+1$. It is usually a singular variety; its smooth locus typically reduces to $X^\irr_\Ga(\GL_d(\bC))$, the subset corresponding to irreducible representations.

Basic to the arithmetic study of character varieties is the following approach: there exists a $\bZ$-model $\cX_d$ of $X_\Ga(\GL_d(\bC))$ using Seshadri's GIT over base rings \cite{seshadri_geometric}. Namely, consider the scheme
$$\cX_d
=\Spec\rbr{\bZ[\Hom(\Ga,\GL_d(\bZ))]^{\GL_d(\bZ)}}$$
over $\Spec(\bZ)$. Then
$$\Spec(\bC)\xx_{\Spec(\bZ)}\cX_d\simeq X_\Ga(\GL_d(\bC)).$$
We find a similar open subscheme $\cX^{\irr}_d\subset\cX_d$ of irreducible representations (over some open subscheme of $\Spec(\bZ)$).

We can thus reduce to finite fields and consider the counting functions
$$A_d(q)=\n{\cX_d(\bF_q)},\qquad
A^{\irr}_d(q)=\n{\cX_d^{\irr}(\bF_q)}$$
defined on prime powers $q$. By definition, $A_d(q)$ (resp.~$A^\irr_d(q)$) equals the number of isomorphism classes of completely reducible representations (resp.~of absolutely irreducible representations, that is, representations that stay irreducible under any finite field extension) of the group $F_m$ of dimension $d$ over $\bF_q$.
Our first main result is the following:

\begin{theorem}\label{t11}
The varieties $X_\Ga(\GL_d(\bC))$ and $X^\irr_\Ga(\GL_d(\bC))$ are polynomial count \cite[Appendix]{hausel_mixed}, that is, 
the functions $A_d(q)$ and $A_d^\irr(q)$ are polynomials with integer coefficients in $q$.
Consequently, the $E$-polynomials of these varieties are
\[E(X_\Ga(\GL_d(\bC)); u,v)=A_d(uv),\qquad
E(X_\Ga^\irr(\GL_d(\bC)); u,v)=A_d^\irr(uv).\]
\end{theorem}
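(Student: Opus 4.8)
The plan is to count the $\bF_q$-points of $\cX_d$ and $\cX_d^\irr$ directly and show the answers are polynomials in $q$; the statement about $E$-polynomials then follows from the standard criterion of Katz in \cite[Appendix]{hausel_mixed}, since polynomial-count varieties have $E(X;u,v)$ equal to the counting polynomial evaluated at $uv$. The starting point is the identification already recorded in the introduction: $A_d(q)$ is the number of isomorphism classes of semisimple (completely reducible) $d$-dimensional representations of $F_m$ over $\bF_q$, and $A_d^\irr(q)$ the number of isomorphism classes of absolutely irreducible ones. Since a representation of $F_m$ is just a choice of $m$ elements of $\GL_d(\bF_q)$, the groupoid of $d$-dimensional representations has groupoid cardinality $\n{\GL_d(\bF_q)}^{m-1}$, and the heart of the matter is to organize this count by isomorphism type.

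First I would set up the relevant generating functions. Let $\irr_{d}(q)$ be the number of absolutely irreducible $d$-dimensional representations of $F_m$ over $\bF_q$ up to isomorphism, so that $A_d^\irr(q)=\irr_d(q)$; I want to show each $\irr_d$ is a polynomial in $q$ with integer coefficients. The semisimple count $A_d(q)$ is then obtained by a plethystic/symmetric-function bookkeeping: a semisimple representation is an unordered direct sum of absolutely irreducible pieces over $\bar\bF_q$, grouped into Galois orbits, and this gives a clean Euler-product-type formula expressing $\sum_d A_d(q)t^d$ in terms of the $\irr_e(q)$ for $e\le d$ — concretely, passing through $\prod_{e\ge1}(1-t^e)^{-\irr_e(q)/\text{(something)}}$, or more robustly via the relation between counting points of $\GL_d$-representations, their irreducible constituents, and the Hall-algebra/$\lambda$-ring identity $\sum_d \n{\GL_d(\bF_q)}^{m-1}t^d/\n{\GL_d(\bF_q)} = \Exp$ of the generating function of absolutely irreducibles. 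Since $\n{\GL_d(\bF_q)}$ is manifestly a polynomial in $q$ and the $\Exp$ (plethystic exponential) of a power series with polynomial coefficients again has polynomial coefficients, inverting this relation expresses $\irr_d(q)$, and then $A_d(q)$, as a polynomial in $q$ with rational coefficients; integrality is then forced because these are honest cardinalities for infinitely many prime powers $q$.

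The main obstacle is the bookkeeping of the semisimple count in a way that is uniform in $q$: one must correctly account for the sizes of automorphism groups of semisimple representations (which depend on multiplicities and on the degrees of the residue fields of the Galois orbits of irreducible constituents) and for the Galois descent from $\bar\bF_q$-irreducibles to absolutely irreducible $\bF_q$-representations. The clean way to handle this is the language of $\lambda$-rings and the plethystic exponential, where the automorphism-group factors are absorbed automatically; the key input is that $\Exp$ preserves the property of having coefficients in $\bQ[q]$. A secondary technical point is to justify that the scheme-theoretic counts $\n{\cX_d(\bF_q)}$ and $\n{\cX_d^\irr(\bF_q)}$ genuinely compute isomorphism classes of semisimple and absolutely irreducible representations respectively — this is where Seshadri's GIT over $\bZ$ \cite{seshadri_geometric} is used, ensuring that geometric points of the GIT quotient correspond to closed orbits, hence semisimple representations, and that the counts behave well after reduction mod $p$ for all but finitely many $p$ (the finitely many excluded primes not affecting polynomiality of a function that agrees with a polynomial on a cofinite set of prime powers).
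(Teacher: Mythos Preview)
Your overall plan---count $\bF_q$-points, package into generating series related by plethystic operations, invoke Katz---matches the paper's. The gap is in the central identity you propose. The groupoid series $\sum_d\n{\GL_d(\bF_q)}^{m-1}t^d$ (this is the paper's $I(e)$) counts \emph{all} representations weighted by automorphisms, not only semisimple ones, and it is \emph{not} the $\Exp$ of the generating function of absolutely irreducibles; there is no Euler product over irreducibles that reproduces it, because indecomposables that are not irreducible contribute. The paper does not proceed by factoring $I(e)$. Instead one inverts $e=\sum_{[V]}[V]$ inside the Hall algebra $H\lser{k\Ga}$ and uses the key lemma (from \cite{mozgovoy_number}) that the coefficients of $e^{-1}$ vanish on non-semisimple $V$ and equal $\prod_S(-1)^{m_S}\n{\End S}^{\binom{m_S}{2}}$ on $V=\bigoplus_S S^{m_S}$. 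This Hall-algebra inversion is precisely what isolates the semisimple locus. Applying the integration map then gives
\[
I(e^{-1})=\Exp\Bigl(\frac{1}{1-q}\sum_{d\ge1}A_d^\irr(q)\,t^d\Bigr),
\]
and since $T\circ I$ is a ring homomorphism one has $I(e^{-1})=T^{-1}F(t)^{-1}$, whence the explicit formula $\sum_{d\ge1}A_d^\irr(q)t^d=(1-q)\Log\bigl(T^{-1}F(t)^{-1}\bigr)$, which is visibly in $\bZ[q]\pser t$. Both the passage to the \emph{inverse} in the Hall algebra and the factor $\tfrac{1}{1-q}$ (arising from the sums $\sum_{n\ge0}(-1)^n q^{\binom n2}/\n{\GL_n}$ over multiplicities of each simple) are essential and are absent from your sketch; without them the $\Exp$-identity you wrote down is false, and the ``inverting this relation'' step has nothing correct to invert.

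Your relation $\sum_d A_d(q)t^d=\Exp\bigl(\sum_d A_d^\irr(q)t^d\bigr)$ for passing from absolutely irreducible to semisimple counts is correct and is exactly what the paper uses (citing \cite[Lemma~5]{mozgovoy_computational}); the issue is only the step that produces $A_d^\irr$ in the first place.
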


Similar formulas for the group $\PGL_d(\bC)$ are proved in Corollary \ref{cm}. The above theorem immediately follows from explicit formulas for $A_d(q)$ and $A_d^\irr$ which we now formulate. Consider the formal power series ring $\bQ(q)\pser t$ with the maximal ideal $\fm$. We define the $\bQ(q)$-linear shift operator $T$ on $\bQ(q)\pser t$ by
\eql{T(t^d)=q^{(1-m)\binom d2}t^d.}{T}
We define the plethystic exponential $\Exp:\fm\to 1+\fm$ by
\eql{\Exp(q^it^d)=(1-q^it^d)^{-1}\qquad
\text{and}\qquad
\Exp(f+g)=\Exp(f)\Exp(g).}{Exp}
It admits an inverse $\Log:1+\fm\to\fm$. Define the power operator
\cite{mozgovoy_computational}
\[\Pow:(1+\fm)\xx\bQ(q)\pser t\to 1+\fm,\qquad
\Pow(f,g)=\Exp(g\Log(f)).\]
Finally, define the series
\eql{F(t)=\sum_{d\geq 0}\rbr{(q-1)\ldots(q^d-1)}^{m-1}t^d\in\bQ(q)\pser t.}{F}

\begin{theorem}\label{t12}
In $\bQ(q)\pser t$, we have
$$\sum_{d\ge0}A_d(q)t^d=\Pow(T^{-1}F(t)^{-1},1-q),$$
$$\sum_{d\ge1}A_d^\irr(q)t^d=(1-q)\Log(T^{-1}F(t)^{-1}).$$
\end{theorem}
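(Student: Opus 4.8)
The plan is to reduce the two identities to a single one, by the standard combinatorics of semisimplification over $\bF_q$, and then to prove that one identity by counting all representations over $\bF_q$ and isolating the absolutely irreducible ones inside the Hall algebra of $F_m$-representations.

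\emph{Setup.} Since $F_m$ is free, $\bF_q[F_m]$ is hereditary, so the category $\mathcal R_q$ of finite-dimensional $\bF_q$-representations of $F_m$ is a finitary hereditary abelian category; from the two-term free resolution of the trivial module one sees that its Euler form is symmetric, namely $\langle V,W\rangle=\dim\Hom(V,W)-\dim\mathrm{Ext}^1(V,W)=(1-m)\dim V\dim W$. By the interpretation recalled above, $A_d(q)$ counts isomorphism classes of semisimple objects of $\mathcal R_q$ of dimension $d$ (the $\bF_q$-points of the GIT quotient), and $A_d^\irr(q)$ counts isomorphism classes of absolutely simple ones.

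\emph{Reduction to one identity.} A simple object $S$ has $\End(S)\iso\bF_{q^r}$ for some $r\mid\dim S$, and $S$ corresponds to an orbit of $\mathrm{Gal}(\bF_{q^r}/\bF_q)$ on the absolutely simple $\bar\bF_q$-representations of dimension $\dim S/r$, while a semisimple object is an arbitrary finite direct sum of simple ones. Feeding these two facts into the multiplicativity of $\Exp$ together with the power-structure identity $\Exp(f)(q,t)=\exp\bigl(\sum_{n\ge1}\tfrac1n f(q^n,t^n)\bigr)$, in which the Adams operations account exactly for the Galois orbits, yields
\[\sum_{d\ge0}A_d(q)t^d=\Exp\Bigl(\sum_{d\ge1}A_d^\irr(q)t^d\Bigr).\]
As $\Pow(f,1-q)=\Exp\bigl((1-q)\Log f\bigr)$, this makes the two displayed identities of the theorem equivalent to one another, so it remains to prove the second of them, which we rewrite as
\[T\inv\bigl(F(t)\inv\bigr)=\Exp\Bigl(\tfrac1{1-q}\sum_{d\ge1}A_d^\irr(q)t^d\Bigr).\]

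\emph{The main count.} From $\n{\Hom(F_m,\GL_d(\bF_q))}=\n{\GL_d(\bF_q)}^m$, the groupoid of $d$-dimensional representations has cardinality $\sum_{[\rho]}\n{\Aut\rho}\inv=\n{\GL_d(\bF_q)}^{m-1}$, and since $\n{\GL_d(\bF_q)}=q^{\binom d2}\prod_{i=1}^d(q^i-1)$ this gives $\sum_{d\ge0}\n{\GL_d(\bF_q)}^{m-1}t^d=T\inv F(t)$. I would then pass to the Hall algebra $\mathbf H$ of $\mathcal R_q$. Because the Euler form is symmetric, the $q^{-\langle-,-\rangle}$-twisted power-series ring is commutative, the integration (dimension-tracking) homomorphism $\int$ maps $\mathbf H$ into it, and — by $\binom d2+\binom e2+de=\binom{d+e}2$ — the operator $T$ is a ring isomorphism from this twisted ring onto the ordinary power-series ring. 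Under $T$ the image of the ``all objects'' element $E=\sum_{[\rho]}[\rho]$ corresponds to $F(t)$; hence the image of its Hall-algebra inverse, namely the inverse of $\int E$ in the twisted ring, corresponds to $F(t)\inv$, i.e.\ $\int(E\inv)=T\inv(F(t)\inv)$. It then remains to show that $\int(E\inv)$ is a plethystic exponential of the simple objects: expanding $E\inv$ through iterated extensions and using that $\mathcal R_q$ is hereditary, the resulting alternating sum, after applying $\int$, collapses to a plethystic exponential of the generating series of simple objects; converting ``simple over $\bF_q$'' to ``absolutely simple over $\bar\bF_q$'' by Galois descent over the $\bF_{q^n}$ (the Adams operations again), and normalising by Schur's lemma ($\End(S)=\bF_q$, hence $\n{\Aut S}=q-1$, for $S$ absolutely simple) produces exactly $T\inv(F(t)\inv)=\Exp\bigl(\tfrac1{1-q}\sum_{d\ge1}A_d^\irr(q)t^d\bigr)$. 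Both formulas then follow, and since they exhibit $A_d$ and $A_d^\irr$ as polynomials in $q$ while these numbers are point counts of the $\bZ$-schemes $\cX_d$, $\cX_d^\irr$, Theorem~\ref{t11} follows too.

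\emph{Where the work is.} The point count $\n{\GL_d(\bF_q)}^m$ and the formal calculus of $\Exp$, $\Log$, $\Pow$ are routine; the substance is the Hall-algebra step. The two delicate points there are: (i) it is the inverse $E\inv$, rather than $E$ itself, that factors cleanly through the \emph{simple} objects — in contrast with Hua's formula, which produces the \emph{indecomposable} objects from $E$ directly — which is why the answer involves $F(t)\inv$ and not $F(t)$; and (ii) keeping the Euler-form twist encoded by $T$, the Galois-descent Adams operations, and the Schur normalisation $\tfrac1{1-q}$ mutually consistent. A stray power of $q$ or a wrong sign there produces a different, incorrect polynomial, as one can already see in dimension $d=2$.
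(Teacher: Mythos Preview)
Your proposal is correct and follows essentially the same route as the paper: both use the Hall algebra of $\bF_q[F_m]$-representations, the element $E=\sum_{[V]}[V]$ (the paper's $e$), the integration map whose $T$-twist is a ring homomorphism, the identity $\int E=T^{-1}F(t)$ giving $\int E^{-1}=T^{-1}F(t)^{-1}$, and then the key input (cited from \cite{mozgovoy_number}) that $\int(E^{-1})=\Exp\bigl(\frac{1}{1-q}\sum_d A_d^\irr(q)t^d\bigr)$, after which the relation $\sum A_d=\Exp(\sum A_d^\irr)$ yields the other formula. The only cosmetic differences are that you work directly with $\bF_q[F_m]$ (using its hereditariness) while the paper passes through the ambient quiver path algebra, and that you perform the reduction $\sum A_d=\Exp(\sum A_d^\irr)$ first rather than last.
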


Using combinatorial methods, we prove a quite surprising positivity property of the counting polynomials $A_d(q)$:

\begin{theorem}\label{t13}
We have $A_d(q)\in\bN[q-1]$.
\end{theorem}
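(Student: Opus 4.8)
The plan is to start from the closed formula of Theorem~\ref{t12}, turn Theorem~\ref{t13} into a positivity statement about an explicit power series in the variable $v:=q-1$, and then prove that statement by a resummation that makes the relevant cancellations explicit. For the reduction: from $\n{\GL_d(\bF_q)}=q^{\binom d2}\prod_{i=1}^{d}(q^i-1)$ one checks that $T^{-1}F(t)$ equals $G(t):=\sum_{d\ge0}\n{\GL_d(\bF_q)}^{m-1}t^d$, the generating series of groupoid counts of the representation varieties; since $q^i-1=v\,(1+q+\dots+q^{i-1})$ with $1+q+\dots+q^{i-1}\in\bN[q]\subseteq\bN[v]$, we have $G(t)\in 1+\bN[v]\pser t$ (in fact its $t^d$-coefficient lies in $v^{d(m-1)}\bN[v]$). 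Using $\Pow(X,g)=\Exp(g\Log X)$ and $\Log(X\inv)=-\Log X$, Theorem~\ref{t12} gives $\sum_{d\ge0}A_d(q)t^d=\Exp\bigl((q-1)\Log G(t)\bigr)=\Pow(G(t),q-1)$, so Theorem~\ref{t13} is equivalent to: the operator $\Pow(-,q-1)$ maps $1+\bN[v]\pser t$ into itself. (For $m\le1$ this is trivial, as $A_d(q)=1$, resp.\ $A_d(q)=q^{d-1}(q-1)=v(1+v)^{d-1}$.)

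The difficulty is that this positivity is invisible term by term. Writing $\Log G(t)=\sum_{a\ge1,\,b\ge0}c_{a,b}\,q^bt^a$ and using $\Exp\bigl((q-1)q^bt^a\bigr)=\tfrac{1-q^bt^a}{1-q^{b+1}t^a}$, one gets $\Pow(G(t),q-1)=\prod_{a,b}\bigl(\tfrac{1-q^bt^a}{1-q^{b+1}t^a}\bigr)^{c_{a,b}}$. Each atomic factor $\tfrac{1-q^bt^a}{1-q^{b+1}t^a}=1+(q-1)q^b\sum_{k\ge1}q^{(b+1)(k-1)}t^{ak}$ lies in $1+\bN[v]\pser t$, but the exponents $c_{a,b}$ are not all nonnegative: they are the coefficients of $(q-1)\Log G(t)=\sum_{d\ge1}A_d^\irr(q)t^d$, and already for $m=2$ one has $A^\irr_2(q)=q(q-1)^2(q^2-2)$, whose $v^2$-coefficient is negative. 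The $\bN[v]$-positivity of $A_d(q)$ only emerges after the cancellation between this ``absolutely irreducible'' contribution and the ``symmetric power of lower-dimensional'' contributions is carried out (for $m=2$, $A_2(q)=A_2^\irr(q)+(q-1)^2(q^2+1)$, and the added term absorbs the bad $v^2$).

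The combinatorial core is therefore a resummation. Instead of expanding over the multiset of simple constituents of a semisimple representation, I would stratify the $d$-dimensional semisimple $\bF_q$-representations by the coarser invariant given by the isomorphism type of the centralizer --- a product of $\GL$'s over finite extensions of $\bF_q$ --- together with the partition recording the multiplicities, show that within each such stratum the above cancellations close, and write the contribution of each stratum as a product of flag counts (of $[d]_q!$-type), geometric series in $q$, and the visibly $\bN[v]$-valued atomic factors, using $q^k-1=v\,(1+q+\dots+q^{k-1})$ to absorb the Frobenius twists coming from the extensions. An induction on $d$ --- peeling off the generic stratum, in which all constituents are distinct and the contribution is manifestly an $\bN[v]$ term --- would then close the argument; the case $m=1$, where $\Log G(t)=t$, no negative exponents appear and $A_d(q)=v(1+v)^{d-1}$, is the clean prototype of the mechanism.

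The step I expect to be the main obstacle is exactly this resummation: organizing the computation so that every cancellation between the absolutely irreducible and the symmetric-power contributions --- a cancellation that genuinely spans different partition types of $d$ and different Frobenius-orbit sizes --- is completed \emph{before} one expands in powers of $v$. The plethystic/Frobenius structure is actively hostile to $v$-positivity (an Adams operation sends $q-1$ to $q^k-1$, which is $v$-positive only through the extra factorization $q^k-1=v\,(1+q+\dots+q^{k-1})$), so the entire point is to find the grouping of strata in which those factorizations can be performed first and the result recognized as an honest nonnegative-integer count. By comparison, the reduction of the first paragraph and the inductive bookkeeping of the third are routine.
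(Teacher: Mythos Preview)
Your reduction to $\sum_{d\ge0} A_d(q)t^d=\Pow(G(t),q-1)$ with $G(t)=T^{-1}F(t)\in 1+\bN[v]\pser t$ is correct (though ``equivalent to'' is an overstatement: Theorem~\ref{t13} only concerns this specific $G$, not the general preservation claim). But the resummation you propose in the last two paragraphs is left as an admitted obstacle, and nothing in your sketch indicates why a centralizer-type stratification should yield $\bN[v]$-valued strata; the $m=1$ prototype is too degenerate to reveal the mechanism. In fact the paper remarks (Section~\ref{s:pos}) that the general statement ``$f\in\bQ_{\ge0}[v]\pser t\Rightarrow\Pow(f,q-1)\in\bQ_{\ge0}[v]\pser t$'' is supported only by computer evidence and that the paper's own method does \emph{not} prove it. You are aimed at the harder, conjectural direction.

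The idea you are missing is to pass to the \emph{inverse} series and exploit an extra sign. The paper's proof has two ingredients. (i)~Show that $T^{-1}F(t)^{-1}=1-\sum_{n\ge1}a_n t^n$ with $a_n\in\bN[v]$: using $[n]_q^!=\sum_{\si\in S_n}q^{l(\si)}$, one identifies the coefficients of $\bigl(\sum_{n\ge0}[n]_q^{!(m-1)}t^n\bigr)^{-1}$ with $\sum_{\si}q^{l(\si)}$ over \emph{connected} tuples $\si\in S_n^{m-1}$ (those fixing no proper initial segment $[k]$), hence they lie in $\bN[q]\subset\bN[v]$; the factors $(q-1)^{n(m-1)}$ and $q^{(m-1)\binom n2}$ from $F$ and $T^{-1}$ preserve this. (ii)~Prove a general positivity lemma: if $f=1-\sum_{n\ge1} a_n t^n$ with $a_n\in\bQ_{\ge0}[v]$, then $\Pow(f,1-q)\in\bQ_{\ge0}[v]\pser t$. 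This uses the factorization $\Pow(f,1-q)=\prod_{d\ge1}\psi_d(f)^{-\Phi_d(q)}$, the fact that $d\,\Phi_d(q)\in\bN[v]$ (a short M\"obius/coprimality count), and the elementary identity $\pow(1-t,-g)=\exp\bigl(g\sum_{n\ge1}t^n/n\bigr)\in\bQ_{\ge0}[v]\pser t$ for $g\in\bQ_{\ge0}[v]$. The crucial point is the $1-(\text{positive})$ shape: it makes each factor $\psi_d(f)^{-\Phi_d}$ a manifest exponential of a positive series. Your $G(t)=1+(\text{positive})$ lacks this shape, which is precisely why the analogous factorization for $\Pow(G,q-1)$ has factors $\pow(f,s^k)$ that are not termwise positive --- exactly the obstruction the paper's remark flags.
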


Contrarily, $A_d^\irr(q)$ does not fulfill this property. It is tempting to interpret this result geometrically, for example as a prediction for a paving of $X_\Ga(\GL_d(\bC)$ by tori (or at least quotients by finite groups of tori); but such a paving can not be expected to be very natural since it cannot be compatible with $X_\Ga^\irr(\GL_d(\bC))$. It would be interesting to test this prediction for the rather explicitly known $\GL_2(\bC)$-character varieties.
We can determine explicitly the lowest order Taylor coefficient of $A_d(q)$ around $q=1$ to derive:

\begin{theorem}\label{t14}
For $m\geq 2$, the topological Euler characteristic of the varieties $X_\Ga(\PGL_d(\bC))$ and $X_\Ga^\irr(\PGL_d(\bC))$ are given by
$$\chi(X_\Ga(\PGL_d(\bC)))=\varphi(d)d^{m-2},$$
$$\chi(X_\Ga^\irr(\PGL_d(\bC)))=\mu(d)d^{m-2},$$
where $\varphi$ and $\mu$ denote the Euler totient function, resp.~the M\"obius function.
\end{theorem}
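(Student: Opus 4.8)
The plan is to extract the topological Euler characteristic from the $E$-polynomial by the specialization $\chi(X) = E(X;1,1)$, which for polynomial-count varieties means $\chi(X_\Ga(\PGL_d(\bC))) = A^{\PGL}_d(1)$ and similarly for the irreducible locus, where $A^{\PGL}_d(q)$ denotes the counting polynomial appearing in Corollary \ref{cm}. So the whole computation reduces to evaluating the $\PGL_d$ counting polynomials at $q=1$. The relation between the $\GL_d$ and $\PGL_d$ counts is the standard one coming from the central torus: $\PGL_d$-representations of $F_m$ are $\GL_d$-representations twisted by characters $F_m \to \bG_m$, and the fibres of $\Hom(F_m,\GL_d)\GIT\GL_d \to \Hom(F_m,\PGL_d)\GIT\PGL_d$ (resp.\ a Galois action by $(\bZ/d)^m$, the $d$-torsion characters) govern the passage. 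Concretely I expect $A^{\PGL}_d(q)$ to be obtained from $A_d(q)$ by dividing out the contribution of the $m$-dimensional torus of characters and accounting for the $d$-torsion twisting, so that near $q=1$ the leading behaviour is controlled by the lowest-order Taylor coefficient of $A_d(q)$ at $q=1$ established via Theorem \ref{t13}.

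The key steps, in order: (1) Make precise the $\GL_d$–$\PGL_d$ dictionary at the level of $\bF_q$-points, tracking the action of the group $\mu_d(\bF_q)^m$ (or rather the group of $d$-torsion characters $F_m\to\bF_q^\times$) on isomorphism classes of (completely reducible / absolutely irreducible) representations, and use Burnside/orbit-counting to write $A^{\PGL}_d(q)$ in terms of twisted counting functions $A_d(q;\chi)$. (2) Show that twisting by a character does not change the counting polynomial — or changes it in a way that disappears at $q=1$ — because over $\bF_q$ the number of representations with a fixed determinant-type twist is, up to the torus factor $(q-1)$, independent of the twist. (3) Use Theorem \ref{t13}, or rather the explicit formula of Theorem \ref{t12}, to read off the lowest nonvanishing coefficient of $A_d(q)$ in powers of $(q-1)$; after dividing by the appropriate power of $(q-1)$ coming from the torus of global twists, evaluate at $q=1$. (4) Identify the resulting arithmetic function. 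For the irreducible locus the generating function is $(1-q)\Log(T^{-1}F(t)\inv)$; expanding $T$ and $F$ to first order around $q=1$, the operator $T$ contributes $q^{(1-m)\binom d2}$ which near $q=1$ behaves like $1 + (1-m)\binom d2 (q-1)$, and the plethystic logarithm of the truncated $F$ should produce, via Möbius inversion over the divisor lattice of $d$, exactly $\mu(d)d^{m-2}$. For the full variety the plethystic exponential instead produces a sum over divisors weighted so as to yield $\varphi(d)d^{m-2}$; this is precisely the combinatorial shadow of the identity $\sum_{e\mid d}\mu(d/e)e^{?}$ versus $\sum_{e\mid d}\varphi(e)$.

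The main obstacle I anticipate is Step (2)–(3): controlling the effect of the shift operator $T$ and the plethystic operations $\Exp$, $\Log$ on the $q\to 1$ limit. The difficulty is that $\Exp$ and $\Log$ do not commute with specialization $q=1$ in any naive way — $\Exp(q^it^d)=(1-q^it^d)\inv$ becomes singular structure at $q=1$ — so one cannot simply substitute. The right tool is Theorem \ref{t13}: knowing a priori that $A_d(q)\in\bN[q-1]$ means $A_d(q)/(q-1)^{c_d}$ is a polynomial in $(q-1)$ with a well-defined value at $q=1$ for the correct integer $c_d$ (which will be the dimension of the torus of twists, namely $m$ minus an adjustment, or $1$ depending on normalization). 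Matching the power of $(q-1)$ that divides $A_d(q)$ against the $\PGL$ normalization, and then extracting the constant term, is where the genuine work lies; the final identification with $\varphi$ and $\mu$ should then follow from a clean Möbius-inversion / divisor-sum manipulation of the leading coefficient, using $\varphi = \mathrm{id}*\mu$ on the divisor poset.
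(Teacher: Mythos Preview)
Your overall strategy---compute the Euler characteristic as $E(X;1,1)$, use Corollary~\ref{cm} to pass from $\GL_d$ to $\PGL_d$, then extract the leading coefficient of $A_d(q)$ and $A_d^{\irr}(q)$ at $q=1$---is the same as the paper's. But two of your proposed steps are off target.

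First, Steps~(1)--(2) are an unnecessary detour. There is no Burnside counting or $d$-torsion twisting to do: Corollary~\ref{cm} already gives the exact relation $E(X_\Ga(\PGL_d);u,v)=A_d(uv)/(uv-1)^m$, coming from the Zariski locally trivial $(\bC^*)^m$-fibration $X_\Ga(\GL_d)\to X_\Ga(\PGL_d)$. So the Euler characteristic is simply $\bigl[A_d(q)/(q-1)^m\bigr]_{q=1}$, and the entire problem is to show this limit exists and equals $\varphi(d)d^{m-2}$ (resp.\ $\mu(d)d^{m-2}$ for the irreducible locus). Your mention of $\mu_d(\bF_q)^m$-actions and twisted counting functions suggests confusion with the $\SL_d$ story; none of that enters here.

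Second, and more seriously, Theorem~\ref{t13} is not the right tool for Step~(3), and the paper does not use it. Knowing $A_d(q)\in\bN[q-1]$ tells you neither the order of vanishing at $q=1$ (you need it to be exactly $m$) nor the value of the leading coefficient. What the paper does instead is prove a self-contained lemma (Lemma~\ref{lec}) governing the $q\to 1$ specialization of $\Log$ and $\Exp$ applied to series of the shape $1+(q-1)^m\sum_n a_n(q)t^n$: expanding $\Log=\Psi^{-1}\circ\log$ and $\Exp=\exp\circ\Psi$ explicitly, every term with more than one factor picks up extra powers of $(q-1)$ and dies at $q=1$, so only the ``length-one'' terms survive, and these produce precisely $\sum_{d\mid n}\mu(d)d^{m-1}a_{n/d}(1)$ (resp.\ the divisor sum without $\mu$). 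Applying this with the observation that $T^{-1}F(t)^{-1}=1+(q-1)^{m-1}(-t)+O(t^2,(q-1)^m)$, so that only the $d=1$ term contributes the constant $a_1(1)=-1$, gives $\mu(d)d^{m-2}$ for the irreducible case; a second application of the lemma converts this via $\varphi=\mathrm{id}*\mu$ into $\varphi(d)d^{m-2}$ for the full variety. You correctly identified the obstacle (plethystic operations do not commute with $q\to 1$), but the resolution is this direct analysis of which terms survive, not an appeal to positivity.
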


Again, it would be interesting to have a geometrical explanation for these Euler characteristics.
We can also count absolutely indecomposable representations (that is, representations that stay indecomposable under any finite field extension) of~$F_m$ over finite fields:

\begin{theorem}
For any $d\ge1$, there exists a polynomial $A_d^\ind(q)\in\bZ[q]$ that counts isomorphism classes of absolutely indecomposable representations of $F_m$ of dimension $d$ over $\bF_q$ for every prime power $q$. These polynomials satisfy
\[\sum_{d\ge1}A_d^\ind(q)t^d=(q-1)\Log\rbr{\sum_{\la}r_\la^{m-1}t^{\n\la}},\]
where the sum ranges over all partitions and, for any partition \la,
\[r_\la=\prod_{n\ge1}q^{\la_n^2}(q^{-1})_{\la_{n}-\la_{n+1}},\qquad (q)_n=(1-q)\dots(1-q^n).\]
\end{theorem}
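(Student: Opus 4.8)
The plan is to follow the derivation of Theorem~\ref{t12}, replacing ``completely reducible / absolutely irreducible'' by ``all / absolutely indecomposable''; the main simplification is that the count of \emph{all} isomorphism classes of representations is now a direct orbit count. A $d$-dimensional representation of $F_m$ over $\bF_q$ is a point of $\GL_d(\bF_q)^m$, two being isomorphic iff $\GL_d(\bF_q)$-conjugate; the category of such representations is finite-dimensional over $\bF_q$, hence Krull--Schmidt.

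Let $c_d(q)$ be the number of all isomorphism classes of $d$-dimensional representations. By Burnside's lemma applied to the simultaneous conjugation action of $\GL_d(\bF_q)$ on $\GL_d(\bF_q)^m$,
$$c_d(q)=\frac1{\n{\GL_d(\bF_q)}}\sum_{g\in\GL_d(\bF_q)}\n{C_{\GL_d(\bF_q)}(g)}^{m}=\sum_{[g]}\n{C_{\GL_d(\bF_q)}(g)}^{m-1},$$
the last sum over conjugacy classes. These are indexed by finitely supported functions $\vi$ from the monic irreducibles $p\ne x$ over $\bF_q$ to partitions with $\sum_p(\deg p)\n{\vi(p)}=d$, and then $C_{\GL_d(\bF_q)}(g)\iso\prod_p C_{\GL_{\n{\vi(p)}}(\bF_{q^{\deg p}})}(u_{\vi(p)})$. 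From the classical formula $\n{C_{\GL_n(\bF_Q)}(u_\mu)}=Q^{\sum_i(\mu'_i)^2}\prod_{i\ge1}(Q^{-1})_{m_i(\mu)}$ and $m_i(\mu)=\mu'_i-\mu'_{i+1}$, one checks that $\n{C_{\GL_{\n\la}(\bF_Q)}(u_{\la'})}$ equals $r_\la$ with $q$ replaced by $Q$. Summing over $\vi$ and grouping the $p$ by degree gives, in $\bQ(q)\pser t$,
$$\sum_{d\ge0}c_d(q)\,t^d=\prod_{e\ge1}\bigl(\psi_e Z\bigr)^{N_e},\qquad Z=Z(t)=\sum_\la r_\la^{m-1}t^{\n\la},$$
where $\psi_e$ is the substitution $q\mto q^e,\ t\mto t^e$ and $N_e$ is the number of monic irreducibles $\ne x$ of degree $e$ over $\bF_q$ (so $N_1=q-1$).

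Next comes the formal identity $\prod_{e\ge1}(\psi_e W)^{N_e}=\Pow(W,q-1)$ for every $W\in1+\fm$: expanding the plethystic exponential and using $\sum_{e\mid k}e\,N_e=q^k-1$ (the factorization of $x^{q^k}-x$, omitting the factor $x$), both sides equal $\exp\!\bigl(\sum_{k\ge1}\tfrac{q^k-1}{k}\,\psi_k\Log W\bigr)$. Therefore $\sum_d c_d(q)\,t^d=\Pow(Z,q-1)=\Exp\bigl((q-1)\Log Z\bigr)$. On the other hand, Krull--Schmidt over $\bF_q$ together with Galois descent along $\bF_q\subset\overline{\bF}_q$ (an $\overline{\bF}_q$-indecomposable representation with Frobenius-invariant isomorphism class has a unique $\bF_q$-form, its automorphism group being connected) gives, by the same argument that yields $\sum_d A_d(q)t^d=\Exp\bigl(\sum_d A_d^\irr(q)t^d\bigr)$ in Theorem~\ref{t12}, that $\sum_d c_d(q)\,t^d=\Exp\bigl(\sum_{d\ge1}A_d^\ind(q)\,t^d\bigr)$, where $A_d^\ind(q)$ counts absolutely indecomposable $d$-dimensional representations over $\bF_q$. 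Comparing the two formulas and cancelling the injective $\Exp$ yields $\sum_{d\ge1}A_d^\ind(q)\,t^d=(q-1)\Log\bigl(\sum_\la r_\la^{m-1}t^{\n\la}\bigr)$. Since $r_\la\in\bZ[q]$ and $\Exp,\Log$ preserve $\bZ[q]$-coefficients \cite{mozgovoy_computational}, the $c_d(q)$ and the $A_d^\ind(q)$ lie in $\bZ[q]$.

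The step I expect to be the main obstacle is the Galois-descent identity $\sum_d c_d(q)\,t^d=\Exp\bigl(\sum_d A_d^\ind(q)t^d\bigr)$: it is the exact analogue of the identity used for Theorem~\ref{t12}, but making it precise requires controlling the Frobenius action on the infinite set of indecomposable representations and the multiplicity bookkeeping in Krull--Schmidt decompositions over $\bF_q$. The remaining ingredients --- the centralizer computation, the identity $m_i(\mu)=\mu'_i-\mu'_{i+1}$, the identification $\n{C_{\GL_{\n\la}}(u_{\la'})}=r_\la$, and the plethystic identity $\prod_e(\psi_e W)^{N_e}=\Pow(W,q-1)$ --- are routine.
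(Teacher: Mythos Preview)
Your argument is correct and follows essentially the same route as the paper: Burnside's formula for $c_d(q)=\n{R_d/G_d}$, the standard parametrization of conjugacy classes and centralizer orders (giving $r_\la$), the product identity $\prod_{e}(\psi_e Z)^{N_e}=\Pow(Z,q-1)$, and finally the relation $\sum_d c_d(q)t^d=\Exp\bigl(\sum_d A_d^{\ind}(q)t^d\bigr)$. The only notable difference is packaging: the paper quotes Formula~\eqref{Pow} from \cite{mozgovoy_computational} for the product--$\Pow$ identity, whereas you reprove it; and, for the step you flag as the obstacle, the paper does not spell out the Krull--Schmidt/Galois-descent argument either but simply invokes \cite[Lemma~5]{mozgovoy_computational}, which also supplies the polynomiality of $A_d^{\ind}$.
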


In contrast to \cite{cavazos_e}, our approach to the arithmetic of the character varieties is non-geometric and purely formal. It is based on the Hall algebra approach to the arithmetic of moduli spaces of representations of quivers developed in \cite{mozgovoy_number}. In fact, we can directly adopt the methods there to derive the explicit formula for $A_d(q)$. The other results follow by a detailed study of the right hand side of this formula and explicit combinatorics. One can expect this approach to even work in a motivic Hall algebra \cite{bridgeland_introduction}, leading to a formula similar to Theorem \ref{t12} for the motives of character varieties.

The paper is organized as follows: in Section \ref{motive}, we recall the Hall algebra methods of \cite{mozgovoy_number} to prove Theorems \ref{t11} and \ref{t12}. In Section \ref{eulerchar}, we derive Theorem \ref{t14} using an elementary argument. Combinatorial notions are introduced in Section \ref{positivity} to derive Theorem \ref{t13}.
In Section \ref{all_reps}, we derive a formula for the number of absolutely indecomposable representations of $F_m$ over finite fields.
In Section \ref{subgroups}, we explain how the numbers of subgroups of fixed index in free groups \cite{hall_subgroups} can be reconstructed from the counting polynomials, providing a potential link of the present study with the topic of subgroup growth \cite{lubotzky_subgroup}.

\section{Computation of the counting polynomial}\label{motive}
Let $\Ga=F_m$ be the free group in $m$ generators and let $k$ be a field. A representation of the group algebra $k\Ga$ can be identified with a representation of the free associative algebra $A$ with $m$ generators such that all generators act bijectively on the representation. Therefore the category of $k\Ga$-representations can be identified with an exact subcategory of the abelian category of $A$-representations. Since the algebra $A$ is a path algebra of the quiver with one vertex and $m$ loops,
the methods of \cite{mozgovoy_number} for the explicit calculation of the number of isomorphism classes of absolutely irreducible representations of quivers continue to work for $\Ga$-representations. We recall the main steps of this calculation and refer to \cite{mozgovoy_number} for the proofs which hold without any changes.

In the following, let $k$ be a finite field with $q$ elements. We first define the Hall algebra $H\lser{k\Ga}$ of the group algebra $k\Ga$. As a (complete, $\bZ_{\geq 0}$-graded) $\bQ$-vector space, we define
$$H\lser{k\Ga}=\prod_{[V]}\bQ\cdot[V],$$
where the direct product ranges over all isomorphism classes of (finite-dimensional) representations $V$ of $k\Ga$. We have a natural grading by the dimension $\dim V$. We define a product on $H\lser{k\Ga}$ by
$$[V]\cdot[W]=\sum_{[X]}g_{V,W}^X[X],$$
where $g_{V,W}^X$ equals the number of subrepresentations $U\subset X$ such that $U$ is isomorphic to $W$ and $X/U$ is isomorphic to $V$. Then we have \cite[3.3]{mozgovoy_number}:

\begin{lemma} This product endows $H\lser{k\Ga}$ with a structure of a $\bZ_{\geq 0}$-graded complete local associative unital $\bQ$-algebra. In particular, every element with constant term $1$ (the class of the zero-dimensional representation) is invertible in $H\lser{k\Ga}$.
\end{lemma}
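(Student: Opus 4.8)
My plan is to recognize $H\lser{k\Ga}$ as the (completed) Ringel--Hall algebra of the category of finite-dimensional $k\Ga$-representations, which --- being the category of those $A$-representations on which all $m$ generators of $A$ act invertibly --- is a finitary abelian category closed under subobjects, quotients and extensions inside the category of $A$-representations, and then to verify the standard Hall-algebra axioms; the lemma is essentially formal. First I would record the finiteness inputs: since $k$ is finite, $\GL_d(k)$ is a finite group, so there are only finitely many isomorphism classes of $d$-dimensional $k\Ga$-representations and each structure constant $g^X_{V,W}$ is finite; moreover a subrepresentation $U\sb X$ with $U\iso W$ and $X/U\iso V$ forces $\dim X=\dim V+\dim W$, so for fixed $V,W$ only finitely many $X$ (all in one degree) contribute. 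Hence $[V]\cdot[W]$ is a well-defined element of $H\lser{k\Ga}$, the product sends the degree-$d_1$ part times the degree-$d_2$ part into the degree-$(d_1+d_2)$ part, and it extends uniquely to a bilinear continuous product on the completed space $\prod_{[V]}\bQ\cdot[V]$; since this space is the direct product of its graded pieces, the algebra is $\bZ_{\geq0}$-graded and complete for the degree filtration.

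For the remaining algebra axioms: from the definition $g^X_{0,V}=g^X_{V,0}=\de_{[X],[V]}$, so $1:=[0]$ is a two-sided unit. Associativity is the classical bijective computation: both $([U]\cdot[V])\cdot[W]$ and $[U]\cdot([V]\cdot[W])$ equal $\sum_{[X]}F^X_{U,V,W}\,[X]$, where $F^X_{U,V,W}$ is the number of filtrations $0\sb X_1\sb X_2\sb X$ by $k\Ga$-subrepresentations with $X_1\iso W$, $X_2/X_1\iso V$, $X/X_2\iso U$ --- the left-hand bracketing organizes these filtrations by the submodule $X_2$, the right-hand one by $X_1$. The only point not already contained in the abelian-category argument is that all the subobjects, quotients, kernels and cokernels involved remain $k\Ga$-representations, which holds by the closure properties noted above; this is exactly \cite[3.3]{mozgovoy_number} transported to the present quiver.

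Finally, the augmentation ideal $\fm_H=\prod_{d\geq1}\bigoplus_{\dim V=d}\bQ\cdot[V]$ is a two-sided ideal by the grading of the product, with $H\lser{k\Ga}/\fm_H\iso\bQ$. If $x\in\fm_H$ then $x^n$ lies in the part of degree $\geq n$, so $\sum_{n\geq0}(-x)^n$ converges in the complete ring and is a two-sided inverse of $1+x$; thus every element with constant term $1$ is invertible, and more generally an element with nonzero scalar constant term $c_0$, being $c_0$ times such an element, is a unit. Hence $\fm_H$ is precisely the set of non-units and $H\lser{k\Ga}$ is local. I do not expect any real obstacle here; the one place to be careful is associativity, and even there the content is just that the exact structure inherited from $A$-representations supports the usual Hall-algebra bijections, after which \cite{mozgovoy_number} applies verbatim.
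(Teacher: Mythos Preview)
Your argument is correct and is precisely the standard Hall--algebra verification; the paper itself gives no proof beyond the citation to \cite[3.3]{mozgovoy_number}, and what you wrote is a faithful unpacking of that reference adapted to $k\Ga$-representations. There is nothing to add.
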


We define an evaluation map
$$I:H\lser{k\Ga}\to\bQ\pser{t},\qquad
[V]\mto\frac{1}{|\Aut(V)|}t^{\dim V}.$$

By \cite[Lemma 3.4]{mozgovoy_number}, we have:

\begin{lemma}\label{lm1}
The composition $T\circ I:H\lser{k\Ga}\to\bQ\pser{t}$ is a $\bQ$-algebra homomorphism, where the operator $T$ is defined by \eqref{T}.
\end{lemma}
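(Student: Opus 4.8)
The plan is to verify multiplicativity of $T\circ I$ by a direct computation. Since $I$ is $\bQ$-linear and maps the degree-$d$ component of $H\lser{k\Ga}$ into $\bQ\cdot t^d$, and $T$ is $\bQ$-linear and preserves this grading, it suffices to compare $(T\circ I)([V]\cdot[W])$ with $(T\circ I)([V])\cdot(T\circ I)([W])$ for two representations $V,W$ of $k\Ga$, of dimensions $d$ and $e$. Writing $[V]\cdot[W]=\sum_{[X]}g_{V,W}^X[X]$ and using that $g_{V,W}^X\ne0$ forces $\dim X=d+e$, the left-hand side equals $q^{(1-m)\binom{d+e}{2}}t^{d+e}\sum_{[X]}g_{V,W}^X/\n{\Aut X}$ and the right-hand side equals $q^{(1-m)(\binom{d}{2}+\binom{e}{2})}t^{d+e}/\rbr{\n{\Aut V}\n{\Aut W}}$. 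Since $\binom{d+e}{2}=\binom{d}{2}+\binom{e}{2}+de$, the claim reduces to
\[\sum_{[X]}\frac{g_{V,W}^X}{\n{\Aut X}}=\frac{q^{-(1-m)de}}{\n{\Aut V}\,\n{\Aut W}}.\]

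The first ingredient is Riedtmann's formula
\[\sum_{[X]}\frac{g_{V,W}^X}{\n{\Aut X}}=\frac{\n{\operatorname{Ext}^1(V,W)}}{\n{\Hom(V,W)}\,\n{\Aut V}\,\n{\Aut W}},\]
which I would prove by counting in two ways the set $S$ of short exact sequences $0\to W\xrightarrow{\iota}X\xrightarrow{\pi}V\to0$ of $k\Ga$-representations whose middle term lies in a fixed isomorphism class $[X]$. Grouping by the subrepresentation $\iota(W)\subset X$ gives $\n S=g_{V,W}^X\n{\Aut V}\n{\Aut W}$. On the other hand $\Aut X$ acts on $S$ with orbits the equivalence classes of extensions of $V$ by $W$ having middle term $X$, and the stabilizer of a sequence is $\sets{1_X+\iota h\pi}{h\in\Hom(V,W)}$, of order $\n{\Hom(V,W)}$; hence $\n S=n_{[X]}\n{\Aut X}/\n{\Hom(V,W)}$, where $n_{[X]}$ is the number of such classes. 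Equating the two expressions, dividing by $\n{\Aut X}$ and summing over $[X]$ (so that $\sum_{[X]}n_{[X]}=\n{\operatorname{Ext}^1(V,W)}$) yields the formula.

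The second ingredient is the Euler form. A representation of $k\Ga$ is the same as a representation of the free associative algebra $A$ on $m$ generators on which all generators act invertibly, and this full subcategory of $A$-modules is closed under extensions: if the generators act invertibly on the sub- and on the quotient module of a short exact sequence, they act invertibly on the middle term by the snake lemma. Consequently $\Hom$ and $\operatorname{Ext}^1$ of $k\Ga$-representations coincide with those computed over $A$. Since $A$ is the path algebra of the quiver with one vertex and $m$ loops, its Euler form is $\langle d,e\rangle=(1-m)de$, so $\dim_k\Hom_A(V,W)-\dim_k\operatorname{Ext}^1_A(V,W)=(1-m)\dim V\dim W$; as $k=\bF_q$ this gives $\n{\operatorname{Ext}^1(V,W)}/\n{\Hom(V,W)}=q^{-(1-m)de}$, which is exactly the missing factor. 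Combining this with Riedtmann's formula completes the proof.

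As the authors emphasize, the computation is identical to the quiver case of \cite{mozgovoy_number}, so I do not expect a real obstacle; the one point genuinely specific to group algebras, which I would be careful to spell out, is the identification of $\Hom$ and $\operatorname{Ext}^1$ over $k\Ga$ with those over $A$, since this is precisely what licenses the use of the hereditary Euler form $(1-m)de$ and with it the twist $q^{(1-m)\binom{d}{2}}$ defining $T$ in \eqref{T}.
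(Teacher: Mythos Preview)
Your proof is correct and is exactly the standard argument: reduce to a single pair $[V],[W]$, invoke Riedtmann's formula for $\sum_{[X]}g_{V,W}^X/\lvert\Aut X\rvert$, and use the hereditary Euler form of the $m$-loop quiver to identify $\lvert\operatorname{Ext}^1(V,W)\rvert/\lvert\Hom(V,W)\rvert=q^{(m-1)de}$, which matches the twist introduced by $T$. The paper does not spell out a proof at all but simply cites \cite[Lemma~3.4]{mozgovoy_number}; your write-up is precisely the argument one finds there, with the one extra observation (which you make correctly) that the full subcategory of $A$-modules on which the generators act invertibly is closed under extensions, so $\Hom$ and $\operatorname{Ext}^1$ over $k\Gamma$ agree with those over $A$ and the quiver Euler form applies verbatim.
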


We consider the (invertible) element
$$e=\sum_{[V]}[V]\in H\lser{k\Ga}.$$
Using $\Hom(\Ga,\GL_d(k))\simeq\GL_d(k)^{m}$ and
\[\n{\GL_d(k)}=\prod_{i=0}^{d-1}(q^d-q^i)
=q^{\binom d2}\prod_{i=1}^d(q^i-1),\]
the following is easily verified:

\begin{lemma}\label{lm2} We have
$$I(e)=T^{-1}F(t),\qquad I(e\inv)=T\inv F(t)\inv,$$
where $F(t)$ is defined by \eqref{F}.
\end{lemma}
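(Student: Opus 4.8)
The statement has two parts, and since $T$ is a $\bQ(q)$-linear automorphism of $\bQ(q)\pser t$, it suffices to compute $I(e)$ and $I(e\inv)$; the claimed formulas then follow by applying $T^{-1}$ and noting $T(T^{-1}F) = F$. For the first identity, I would compute directly from the definitions. By definition $I(e) = \sum_{[V]} \frac{1}{\n{\Aut(V)}} t^{\dim V}$, where the sum runs over isomorphism classes of $k\Ga$-representations. The key input is the classical orbit-counting (Burnside / class-equation) identity: for the action of $\GL_d(k)$ on $\Hom(\Ga,\GL_d(k)) \simeq \GL_d(k)^m$ by simultaneous conjugation, whose orbits are exactly the isomorphism classes of $d$-dimensional $k\Ga$-representations with stabilizer $\Aut(V)$, one has
\eql{\sum_{[V],\,\dim V = d}\frac{1}{\n{\Aut(V)}} = \frac{\n{\GL_d(k)^m}}{\n{\GL_d(k)}} = \n{\GL_d(k)}^{m-1}.}{burnside}
Plugging in the stated formula $\n{\GL_d(k)} = q^{\binom d2}\prod_{i=1}^d(q^i-1)$ gives that the degree-$d$ coefficient of $I(e)$ equals $q^{(m-1)\binom d2}\bigl(\prod_{i=1}^d(q^i-1)\bigr)^{m-1} = q^{(m-1)\binom d2}\cdot[t^d]F(t)$. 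Comparing with \eqref{T}, which says $T(t^d) = q^{(1-m)\binom d2}t^d$, i.e.\ $T^{-1}(t^d) = q^{(m-1)\binom d2}t^d$, we recognize exactly $I(e) = T^{-1}F(t)$, as claimed.

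For the second identity, I would use the algebra structure. Since $e = \sum_{[V]}[V]$ is invertible in $H\lser{k\Ga}$ (it has constant term $1$), we have $e\cdot e\inv = 1$, and applying the algebra homomorphism $T\circ I$ from Lemma \ref{lm1} gives $(T\circ I)(e)\cdot(T\circ I)(e\inv) = 1$ in $\bQ(q)\pser t$. By the first part, $(T\circ I)(e) = T(T^{-1}F(t)) = F(t)$, so $(T\circ I)(e\inv) = F(t)^{-1}$, hence $I(e\inv) = T^{-1}\bigl(F(t)^{-1}\bigr)$. This is precisely the asserted formula $I(e\inv) = T\inv F(t)\inv$.

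The only genuinely substantive point is \eqref{burnside}, which is the Burnside-type count for a single orbit space; but this is entirely standard (and is exactly the ingredient already used in \cite{mozgovoy_number}), so I expect no real obstacle. One should be slightly careful that the sum defining $I(e)$ is over \emph{all} finite-dimensional $k\Ga$-representations rather than, say, semisimple ones — but the Burnside identity \eqref{burnside} is insensitive to this, since it counts \emph{all} $\GL_d(k)$-orbits on $\GL_d(k)^m$ weighted by inverse stabilizer size, which is all of them. A second, purely bookkeeping point is keeping the direction of the exponent in $T$ versus $T^{-1}$ straight, i.e.\ that $T$ multiplies $t^d$ by $q^{(1-m)\binom d2}$ so that $T^{-1}$ contributes the factor $q^{(m-1)\binom d2}$ matching \eqref{burnside}; with that bookkeeping in place the proof is a two-line verification.
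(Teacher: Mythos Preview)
Your proof is correct and follows essentially the same route as the paper: both compute $I(e)$ directly via the orbit--stabilizer identity $\sum_{[V],\,\dim V=d}\n{\Aut(V)}^{-1}=\n{\GL_d(k)}^{m-1}$, and both deduce the formula for $I(e^{-1})$ from the first part by invoking Lemma~\ref{lm1} (that $T\circ I$ is multiplicative). The paper's proof is terser---it compresses your Burnside step into ``by the definitions''---but the argument is the same.
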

\begin{proof}
By the definitions, we have
\[I(e)
=\sum_{d\ge0}\frac{\n{\GL_d(k)^m}}{\n{\GL_d(k)}}t^d
=\sum_{d\ge0}\rbr{q^{\binom d2}\prod_{i=1}^d(q^i-1)}^{m-1}t^d
=T\inv F(t).
\]
This implies $TI(e)=F(t)$ and therefore $TI(e\inv)=F(t)\inv$ by Lemma \ref{lm1}.
Therefore $I(e\inv)=T\inv F(t)\inv$.
\end{proof}

The key lemma towards counting absolutely irreducible representations is \cite[Lemma 3.5]{mozgovoy_number}:

\begin{lemma}
Writing $$e^{-1}=\sum_{[V]}\gamma_V[V]$$ in $H\lser{k\Ga}$, we have  the following description of the coefficients $\gamma_V$
\begin{enumerate}
	\item If $V$ is not completely reducible, then $\gamma_V=0$.
	\item If $V=\bigoplus_{[S]}S^{m_S}$ is completely reducible (the direct sum ranging over all isomorphism classes of irreducible representations $S$ of $k\Ga$), then
$$\gamma_V=\prod_{[S]}(-1)^{m_S}|\End(S)|^{\binom{m_S}2}.$$
\end{enumerate}
\end{lemma}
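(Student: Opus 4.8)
The plan is to work entirely in the Hall algebra $H\lser{k\Ga}$ and exploit the fact that the subalgebra spanned by classes of completely reducible representations is a polynomial algebra, with the classes of distinct irreducibles as "independent" generators. First I would observe that since $\Ga$-representations form an exact subcategory of $A$-representations (as recalled at the start of Section~\ref{motive}), the notion of simple/semisimple object makes sense and the Jordan--H\"older and Krull--Schmidt theorems apply. The key structural input is that for non-isomorphic irreducible representations $S_1,S_2$ of $k\Ga$, one has $\Hom(S_1,S_2)=0$, so that $[S_1]\cdot[S_2]=[S_2]\cdot[S_1]=[S_1\oplus S_2]$ — the corresponding Hall numbers $g^X_{S_1,S_2}$ count only the split extension. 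More generally, by induction on dimension using the associativity from the first lemma, the product of classes of semisimples supported on disjoint sets of irreducibles is just the class of the direct sum.

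Next I would analyze the "local" structure at a single irreducible $S$. Writing $H_S$ for the closed subalgebra generated by $[S]$, I would show that $[S]^n = \n{\Aut}$-weighted sums over representations all of whose composition factors are $S$; restricting to the semisimple part, the only such semisimple is $S^n$, and one computes the relevant Hall coefficient $g$ for building $S^n$ out of $n$ copies of $S$. This is where the factor $\n{\End(S)}^{\binom n2}$ enters: it is (up to the normalization absorbed into $\Aut$) the count of the relevant flags, exactly as in \cite[Lemma~3.5]{mozgovoy_number}. Combining across all irreducibles via the disjoint-support multiplicativity, $e$ restricted modulo non-semisimple terms factors as an (infinite, but convergent in the complete topology) product $\prod_{[S]}\rbr{\sum_{n\ge0} c_{S,n}[S^n]}$, and inverting each factor gives the alternating sign $(-1)^{m_S}$ together with $\n{\End(S)}^{\binom{m_S}2}$.

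The final step is to argue that $e^{-1}$ has no components on non-completely-reducible $V$, i.e.\ claim (1). Here I would use that the full element $e=\sum_{[V]}[V]$ already factors as the product over $[S]$ of the "local at $S$" sums $\sum_{n\ge0}[S^n]$ times a factor that is itself invertible and supported on semisimples — more precisely, one shows $e$ equals this semisimple product, because every representation is an iterated extension of its composition factors and the Hall product reconstructs all such extensions from the semisimples. Inverting a product supported on semisimple classes stays supported on semisimple classes, giving (1). The main obstacle I anticipate is making the factorization of $e$ over the (infinitely many) isomorphism classes of irreducibles rigorous in the complete graded topology, and carefully tracking that the Hall-number computation for $[S]^n$ really produces $\n{\End(S)}^{\binom n2}$ after the $\n{\Aut(S^n)}$-normalization built into $I$ later — but since the statement is quoted verbatim from \cite{mozgovoy_number} and the quiver there (one vertex, $m$ loops) has exactly the same representation category restricted to the relevant subcategory, I would ultimately just cite that computation, with the above serving as the conceptual skeleton.
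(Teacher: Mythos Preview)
The paper does not actually prove this lemma; it simply quotes \cite[Lemma~3.5]{mozgovoy_number}. Your proposal ultimately defers to the same citation, so at that level you are aligned with the paper. However, the ``conceptual skeleton'' you offer is not correct, and it is not the argument of \cite{mozgovoy_number}.

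The central error is the claim that for non-isomorphic simples $S_1,S_2$ one has $[S_1]\cdot[S_2]=[S_1\oplus S_2]$. Vanishing of $\Hom(S_1,S_2)$ says nothing about $\mathrm{Ext}^1$, and for $k\Ga$ with $m\ge 2$ there are plenty of nontrivial extensions between non-isomorphic simples; each such extension $X$ contributes $g^X_{S_1,S_2}=1$ to the product. The same problem undermines your proposed factorization $e=\prod_{[S]}\bigl(\sum_{n\ge0}c_{S,n}[S^n]\bigr)$: such a product is order-dependent in a noncommutative Hall algebra, and it can only reproduce $e$ when the chosen ordering of the simples is compatible with the direction of all extensions. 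For $m\ge2$ there are extensions both ways between any pair of non-isomorphic simples, so no ordering works. Consequently your argument for part~(1) collapses, and with it the reduction that would let you read off part~(2) factor by factor.

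The actual proof in \cite{mozgovoy_number} is a direct verification that $e\cdot\bigl(\sum_{[V]\ \mathrm{ss}}\gamma_V[V]\bigr)=1$. Expanding, the coefficient of $[X]$ is $\sum_{U\subset X,\ U\ \mathrm{ss}}\gamma_U$; since every semisimple subobject lies in the socle, this becomes a sum over subobjects of $\mathrm{soc}(X)=\bigoplus_{[S]}S^{n_S}$. That sum factors over the simples $S$, and the $S$-factor is $\sum_{m=0}^{n_S}(-1)^m|\End(S)|^{\binom m2}\genfrac[]{0pt}{}{n_S}{m}_{|\End(S)|}$, which vanishes for $n_S\ge1$ by the $q$-binomial theorem (set $x=-1$ in $\prod_{i=0}^{n-1}(1+xq^i)=\sum_m q^{\binom m2}\genfrac[]{0pt}{}{n}{m}_q x^m$). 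This simultaneously gives both (1) and (2) with no appeal to a factorization of $e$.
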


Based on this lemma, one can prove (see \cite[Theorem 4.2]{mozgovoy_number}) that:

\begin{theorem}\label{th:3} 
The function $A_d^\irr(q)$ is a polynomial in $q$ and we have in $\bQ\pser{t}$
\[I(e^{-1})=\Exp\rbr{\frac{1}{1-q}\sum_{d\geq 1}A_d^\irr(q)t^d}_{q=\n k}.\]
\end{theorem}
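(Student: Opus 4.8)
The plan is to evaluate $I(e^{-1})$ explicitly from the coefficients $\gamma_V$ above, recognise the result as a plethystic exponential, and then reorganise its argument in terms of absolutely irreducible representations by Galois descent; throughout, one transcribes the argument of \cite[\S4]{mozgovoy_number}, which applies without change because $k\Ga$-modules form an exact subcategory of modules over the path algebra of the quiver with one vertex and $m$ loops. Since $\gamma_V=0$ unless $V$ is completely reducible, and since $\dim$, $\n{\Aut(\cdot)}$ and $\gamma$ are all multiplicative along the isotypic decomposition $V=\bop_{[S]}S^{m_S}$ — where $\Aut(S^{m_S})=\GL_{m_S}(\End S)$ and $\End S$ is a finite field $\bF_{q^{r_S}}$ by Wedderburn's theorem — summing over all families $(m_S)$ and collecting factors gives
\[I(e^{-1})=\prod_{[S]\ \mathrm{irr}}\rbr{\sum_{m\ge0}\frac{(-1)^m\n{\End S}^{\binom m2}}{\n{\GL_m(\End S)}}\,t^{m\dim S}},\]
the product being meaningful since in each fixed $t$-degree only finitely many $[S]$ contribute.

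Next I would simplify the local factor at $S$. Writing $Q=\n{\End S}$ and using $\n{\GL_m(\bF_Q)}=Q^{\binom m2}\prod_{i=1}^m(Q^i-1)$, the $m$-th summand collapses to $t^{m\dim S}/\prod_{i=1}^m(1-Q^i)$, and the resulting series is resummed by Euler's $q$-exponential identity — in the form valid for $Q>1$ — as $\prod_{i\ge1}(1-Q^{-i}t^{\dim S})=\Exp\!\big(\tfrac{t^{\dim S}}{1-Q}\big)$. Here one must read $\tfrac1{1-Q}$ as $-\sum_{i\ge1}Q^{-i}$, i.e.\ form the plethystic $\Exp$ over $\bQ(q)$ and specialise at the end, exactly as in the statement. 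Multiplying over all irreducibles yields
\[I(e^{-1})=\Exp\Bigg(\sum_{[S]\ \mathrm{irr}}\frac{t^{\dim_k S}}{1-\n{\End S}}\Bigg),\]
so that the problem reduces to rewriting the argument of $\Exp$ in the asserted form.

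For that last step I would invoke Galois descent and Schur's lemma: an irreducible $k\Ga$-module $S$ with $\End S=\bF_{q^r}$ is absolutely irreducible as an $\bF_{q^r}\Ga$-module, and the adjunction isomorphism $\End_{k\Ga}(S)\cong\bop_{\si\in\mathrm{Gal}(\bF_{q^r}/k)}\Hom_{\bF_{q^r}\Ga}(S^\si,S)$ forces its Galois orbit to be free of cardinality $r$; conversely, every free size-$r$ Galois orbit of $e$-dimensional absolutely irreducible $\bF_{q^r}\Ga$-modules arises from a unique such $S$, with $\dim_k S=re$. Grouping $\sum_{[S]}t^{\dim_kS}/(1-\n{\End S})$ by $r=r_S$, expressing the orbit counts through the absolutely irreducible counts $A_e^{\irr}(q^s)$ over the subfields $\bF_{q^s}$ with $s\mid r$, and carrying out the resulting Möbius inversion over divisors collapses the sum to $\tfrac1{1-q}\sum_{d\ge1}A_d^{\irr}(q)t^d$, which proves the displayed identity; combined with $I(e^{-1})=T^{-1}F(t)^{-1}$ (Lemma \ref{lm2}) it forces $A_d^{\irr}(q)=(1-q)\big[\Log\big(T^{-1}F(t)^{-1}\big)\big]_{t^d}$, so that $A_d^{\irr}$, as a function of the prime power $q$, is the specialisation of a fixed element of $\bQ(q)$. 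The remaining — and main — difficulty is to show that this rational function in fact lies in $\bZ[q]$: this is not a formal consequence of the Hall-algebra computation above but the substantive combinatorial statement of \cite[Theorem 4.2]{mozgovoy_number} (an analogue, for absolutely irreducible representations, of the polynomiality of Kac polynomials for quivers).
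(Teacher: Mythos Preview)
Your outline faithfully follows the argument of \cite[\S4]{mozgovoy_number}, which is exactly what the paper invokes here (the paper gives no self-contained proof and simply cites that reference). The product decomposition of $I(e^{-1})$ over irreducibles, the Euler $q$-exponential identity for each local factor, and the appeal to Galois descent are all correct and are the same ingredients used there. Your closing remark that integrality of $A_d^{\irr}$ is the substantive combinatorial input, not a formality, is also on point.

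There is, however, a genuine slip in the passage from the product formula to the displayed identity. You write that ``the problem reduces to rewriting the argument of $\Exp$ in the asserted form'' and that M\"obius inversion ``collapses the sum to $\tfrac{1}{1-q}\sum_{d\ge1}A_d^{\irr}(q)t^d$''. Taken literally this is false: the two arguments
\[
\sum_{[S]\ \mathrm{irr}}\frac{t^{\dim_k S}}{1-\lvert\End S\rvert}
\qquad\text{and}\qquad
\frac{1}{1-q}\sum_{d\ge1}A_d^{\irr}(q)\,t^d
\]
are \emph{not} equal, either in $\bQ(q)\pser t$ or after specialising $q=\lvert k\rvert$. Already the coefficient of $t^2$ differs by $\tfrac{1}{2(1-q^2)}\bigl(A_1^{\irr}(q^2)-A_1^{\irr}(q)\bigr)$, coming from the $2$-dimensional irreducibles with $\End S=\bF_{q^2}$. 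Since $\Exp$ is injective on $\fm$, equality of the arguments would be needed for your reduction, and it fails.

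What \emph{is} true --- and what the M\"obius inversion actually establishes --- is that after applying $\log$ (equivalently $\Psi=\sum_{n\ge1}\tfrac1n\psi_n$) and specialising, both sides agree:
\[
\log I(e^{-1})
=\sum_{[S]}\sum_{n\ge1}\frac{t^{\,n\dim_k S}}{n\bigl(1-\lvert\End S\rvert^{\,n}\bigr)}
=\sum_{s\ge1}\frac{1}{s(1-q^s)}\sum_{e\ge1}A_e^{\irr}(q^s)\,t^{se}
=\Psi\!\left(\frac{1}{1-q}\sum_{d\ge1}A_d^{\irr}(q)t^d\right)\Big|_{q=\lvert k\rvert}.
\]
The middle equality is where the Galois-orbit count $\tfrac1r\sum_{s\mid r}\mu(r/s)A_e^{\irr}(q^s)$ and the divisor M\"obius inversion enter; the extra summation over $n$ supplied by $\Psi$ is precisely what makes the inversion collapse (only the term $n\cdot(r/s)=1$ survives). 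So your ingredients are right, but the M\"obius step must be carried out at the level of $\log I(e^{-1})$, not at the level of the argument of $\Exp$.
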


\begin{proof}[Proof of Theorems \ref{t11} and \ref{t12}]
We obtain from Theorem \ref{th:3} and Lemma \ref{lm2} that $A_d^\irr(q)$ is a polynomial in $q$ satisfying in $\bQ(q)\pser t$
\[\sum_{d\ge1}A_d^\irr(q)t^d=(1-q)\Log\rbr{T\inv F(t)\inv}.\]
This implies that $A_d^\irr(q)$ has integer coefficients.
Using \cite[Lemma 5]{mozgovoy_computational}, one can prove that
$A_d(q)$ is a polynomial in $q$ satisfying in $\bQ(q)\pser{t}$
$$\sum_{d\geq 0}A_d(q)t^d
=\Exp\rbr{\sum_{d\geq 1}A_d^{\irr}(q)t^d}.$$
Therefore
$$\sum_{d\geq 0}A_d(q)t^d
=\Exp\rbr{(1-q)\Log\rbr{T\inv F(t)\inv}}
=\Pow\rbr{T\inv F(t)\inv,1-q}$$
and $A_d(q)$ has integer coefficients.
This finishes the proof of Theorem \ref{t12}. To prove Theorem \ref{t11}, we note that by \cite[Appendix]{hausel_mixed}, the $E$-polynomials are given by the counting polynomials evaluated at $q=uv$.
\end{proof}

Now we pass to the $\PGL_d(\bC)$-character varieties. There is a free action of the torus $(\bC^*)^m$ on $X_\Ga(\GL_d(\bC))$. The quotient fibration $X_\Ga(\GL_d(\bC))\to X_\Ga(\PGL_d(\bC))$ has fibres isomorphic to $(\bC^*)^m$ and is Zariski locally trivial by Hilbert $90$.
Similarly for the open subsets corresponding to irreducible representations.

%

Consequently, we see that the $E$-polynomials of the $\PGL_d(\bC)$-character varieties are known:

\begin{corollary}\label{cm} We have
$$E(X_\Ga(\PGL_d(\bC)); u,v)=\frac{1}{(uv-1)^m}A_d(uv),$$
$$E(X_\Ga(\PGL_d(\bC))^\irr; u,v)=\frac{1}{(uv-1)^m}A_d^\irr(uv).$$
\end{corollary}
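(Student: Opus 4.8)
The plan is to deduce the $E$-polynomial of the $\PGL_d(\bC)$-character variety from that of the $\GL_d(\bC)$-character variety using the torus fibration described immediately before the statement. First I would recall that $E$-polynomials are multiplicative on Zariski-locally-trivial fibrations: if $f\colon Y\to Z$ is Zariski locally trivial with fibre $T$, then $E(Y;u,v)=E(T;u,v)\cdot E(Z;u,v)$. This is a standard property of $E$-polynomials (additivity over stratifications plus multiplicativity over products, applied to a trivializing open cover of $Z$ and induction on the number of pieces).

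Next I would apply this to the fibration $X_\Ga(\GL_d(\bC))\to X_\Ga(\PGL_d(\bC))$, which by Hilbert 90 is Zariski locally trivial with fibre $(\bC^*)^m$. Since $E((\bC^*)^m;u,v)=(uv-1)^m$, we get
\[E(X_\Ga(\GL_d(\bC));u,v)=(uv-1)^m\cdot E(X_\Ga(\PGL_d(\bC));u,v),\]
and dividing by $(uv-1)^m$ together with Theorem~\ref{t11}, which gives $E(X_\Ga(\GL_d(\bC));u,v)=A_d(uv)$, yields the first claimed formula. The same argument applied to the open subvariety of irreducible representations, where the fibration is again Zariski locally trivial with fibre $(\bC^*)^m$, gives the second formula using $E(X_\Ga^\irr(\GL_d(\bC));u,v)=A_d^\irr(uv)$.

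The only genuine subtlety — really the point one should make sure is airtight rather than a true obstacle — is the passage from "$X_\Ga(\GL_d(\bC))$ is polynomial count" to "the division by $(uv-1)^m$ produces an honest $E$-polynomial of $X_\Ga(\PGL_d(\bC))$." One should note that multiplicativity of $E$-polynomials in a Zariski-locally-trivial torus bundle forces $(uv-1)^m$ to divide $A_d(uv)$ in $\bZ[uv]$, so the quotient is automatically a polynomial in $uv$; hence $X_\Ga(\PGL_d(\bC))$ is itself polynomial count with counting polynomial $A_d(q)/(q-1)^m$, which is consistent with the identification $X_\Ga(\PGL_d(\bC))=X_\Ga(\GL_d(\bC))/(\bC^*)^m$ having a $\bZ$-model whose point counts divide those of $\cX_d$ by $(q-1)^m$. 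Beyond recording this divisibility remark, the proof is a one-line application of multiplicativity and reduces to citing Theorem~\ref{t11}.
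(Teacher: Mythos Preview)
Your argument is correct and is exactly the approach the paper takes: the paper does not write out a separate proof but simply records, just before the statement, that the quotient fibration $X_\Ga(\GL_d(\bC))\to X_\Ga(\PGL_d(\bC))$ is Zariski locally trivial with fibre $(\bC^*)^m$ by Hilbert~90 (and similarly on the irreducible locus), and then states the corollary as an immediate consequence of Theorem~\ref{t11}. You have spelled out precisely the multiplicativity step the paper leaves implicit.
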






We conclude this section with an example.
First, we have
\[A^\irr_1(q)=A_1(q)=(q-1)^m.\]
As the first nontrivial special case of Theorem \ref{t12}, we have $A_2^{\irr}(q)=$
$$(q-1)^m\rbr{q^{m-1}(q-1)^{m-1}((q+1)^{m-1}-1)-\frac{1}{2}(q+1)^{m-1}+\frac{1}{2}(q-1)^{m-1}}.$$
This implies
\begin{multline*}
A_2(q)=A_2^{\irr}(q)+\frac12\rbr{A_1^\irr(q^2)+A_1^\irr(q)^2}=\\
(q-1)^m\rbr{q^{m-1}(q-1)^{m-1}\rbr{(q+1)^{m-1}-1}+\frac12q\rbr{(q+1)^{m-1}+(q-1)^{m-1}}}
\end{multline*}
and thus
\begin{multline*}
E(X_\Ga(\PGL_2(\bC));u,v)=\frac{1}{(uv-1)^m}A_2(uv)\\
=(uv)^{m-1}(uv-1)^{m-1}\rbr{(uv+1)^{m-1}-1}+\frac{1}{2}uv\rbr{(uv+1)^{m-1}+(uv-1)^{m-1}},
\end{multline*}
which should be compared with \cite[Theorem B]{cavazos_e}.

\section{Euler characteristic}\label{eulerchar}



In this section, we prove Theorem \ref{t14}.
We first need a lemma on the behaviour of specialization at $q=1$ for special plethystic exponentials and logarithms. Let $A$ be the subring of $\bQ(q)$ consisting of all rational functions without pole at $q=1$.

\begin{lemma}\label{lec}
Assume that
	\[\Log\rbr{1+(q-1)^m\sum_{n\ge1}a_n(q)t^n}=(q-1)^m\sum_{n\ge1}b_n(q)t^n\in\bQ(q)\pser t.\]
Then $a_n(q)\in A$ for all $n\ge1$ if and only if $b_n(q)\in A$ for all $n\ge 1$. Moreover,
\[b_n(1)=\sum_{d\mid n}a_{n/d}(1)\mu(d)d^{m-1},\qquad a_n(1)=\sum_{d\mid n} b_{n/d}(1)d^{m-1}.\]
\end{lemma}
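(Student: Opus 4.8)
The plan is to work with the plethystic logarithm via its standard Adams/power-sum expansion and to track how the special form $(q-1)^m(\cdots)$ propagates. Recall that $\Log$ admits the formula
\[
\Log(1+f)=\sum_{k\ge1}\frac{\mu(k)}{k}\,\psi_k\bigl(\log(1+f)\bigr),
\]
where $\log$ is the ordinary logarithm of power series, $\psi_k$ is the $k$-th Adams operation sending $q^i t^d\mapsto q^{ki}t^{kd}$, and $\mu$ is the classical Möbius function; equivalently, one has the inversion $f=\sum_{k\ge1}\frac{1}{k}\psi_k(\Log(1+f))$. First I would record that $\psi_k$ does \emph{not} preserve the ideal $(q-1)$, but that $\psi_k(q-1)=q^k-1=(q-1)(1+q+\dots+q^{k-1})$, so $\psi_k$ sends $(q-1)^m A\pser t$ into $(q-1)^m A\pser t$ and, modulo $(q-1)^{m+1}$, multiplies the coefficient by $k^m$ (since $1+q+\dots+q^{k-1}\equiv k\bmod (q-1)$). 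This is the arithmetic heart of the two displayed formulas for $a_n(1),b_n(1)$: the Möbius inversion built into $\Log$ contributes the $\mu(d)$, and the Adams operations contribute the $d^{m-1}$ — one factor $d^m$ from rescaling $(q-1)^m$, divided by one factor $d$ from the $\tfrac{1}{k}$ (resp.\ $\tfrac{\mu(k)}{k}$) in the formula.

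The concrete steps I would carry out are: (1) Write $g(t)=\sum_{n\ge1}a_n(q)t^n$ and $h(t)=\sum_{n\ge1}b_n(q)t^n$, so the hypothesis is $\Log(1+(q-1)^m g)=(q-1)^m h$, equivalently, applying the inverse relation, $(q-1)^m g=\sum_{k\ge1}\tfrac1k\psi_k((q-1)^m h)$. (2) Show the ring $A$ and the ideal-like set $(q-1)^m A\pser t$ are stable under all $\psi_k$ and under the ordinary $\exp,\log$ of series with coefficients in that set — here one uses that $A$ is a subring of $\bQ(q)$ containing $q$, that $\psi_k$ is a ring endomorphism with $\psi_k(q-1)=q^k-1\in (q-1)A$, and that $\exp$/$\log$ of a series whose coefficients lie in $(q-1)^m A\pser t$ (times the appropriate rational prefactors with no pole at $q=1$, which come out of $\mu(k)/k$, $1/k$, and the binomial denominators) again have coefficients of that form; note $\tfrac1{j!}$ causes no pole at $q=1$. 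This gives the "if and only if": $a_n\in A\ \forall n\iff b_n\in A\ \forall n$. (3) Finally, reduce everything modulo $(q-1)$: divide the relation $(q-1)^m g=\sum_k\tfrac1k\psi_k((q-1)^m h)$ by $(q-1)^m$, extract the coefficient of $t^n$, and evaluate at $q=1$ using $\psi_k((q-1)^m b_{n'}(q)t^{kn'})/(q-1)^m\big|_{q=1}=k^m\,b_{n'}(1)$ when $kn'=n$. The $t^n$-coefficient then reads
\[
a_n(1)=\sum_{k n'=n}\frac{k^m}{k}\,b_{n'}(1)=\sum_{d\mid n} b_{n/d}(1)\,d^{m-1},
\]
and symmetrically, using $\Log(1+(q-1)^m g)=(q-1)^m h$ together with $\Log(1+f)=\sum_{k}\tfrac{\mu(k)}{k}\psi_k(\log(1+f))$ and $\log(1+(q-1)^m g)=(q-1)^m g+(q-1)^{2m}(\cdots)$ (the higher terms die mod $(q-1)^{m+1}$), one gets
\[
b_n(1)=\sum_{d\mid n} a_{n/d}(1)\,\mu(d)\,d^{m-1}.
\]
These two are of course Möbius-inverse to one another, as a sanity check.

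The main obstacle I anticipate is purely bookkeeping rather than conceptual: one must be careful that the power-sum/Adams expansions of $\Log$ and its inverse converge $t$-adically (which they do, since $\psi_k$ raises $t$-degree to $kn\ge n$) and that every scalar that appears — the $\mu(k)/k$, the $1/k$, the $1/j!$ from expanding $\exp$ and $\log$, and the binomial coefficients hidden inside the plethystic $\Exp$ — lies in $A$, i.e.\ has no pole at $q=1$; since all of these are in fact rational numbers, this is automatic, but it needs to be said cleanly so that the stability statement in step (2) is airtight. A secondary point of care is that $\psi_k$ enlarges the exponent of $(q-1)$ only in the "harmless" direction (it never decreases it), so that the hypothesis $a_n\in A$ never needs to be assumed with a pole and the equivalence is genuinely two-sided; once the stability lemma is phrased correctly this is immediate.
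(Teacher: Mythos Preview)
Your approach is correct and essentially identical to the paper's: both use the decomposition $\Log=\Psi^{-1}\circ\log$, $\Exp=\exp\circ\Psi$ with $\Psi=\sum_k\tfrac1k\psi_k$, observe that $\psi_k((q-1)^m\cdot{-})\subset(q-1)^mA\pser t$ with leading contribution $k^m$ at $q=1$, and note that the $l\ge2$ terms in the expansion of $\log$ or $\exp$ carry extra factors of $(q-1)^m$ and hence vanish upon specialization. One small slip: in step~(1) you write the inverse relation as $(q-1)^mg=\sum_k\tfrac1k\psi_k((q-1)^mh)$, but the exact identity is $1+(q-1)^mg=\exp\bigl(\sum_k\tfrac1k\psi_k((q-1)^mh)\bigr)$; your equation holds only modulo $(q-1)^{2m}$, which is all you need (and which you handle correctly in the $\Log$ direction), so just state it that way.
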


\begin{proof} We recall a more direct definition of the operators $\Exp$ and $\Log$ on formal power series: for $n\ge1$, define the $\bQ$-linear Adams operator $\psi_n$ on $R=\bQ(q)\pser t$ by $\psi_n(q^it^j)=q^{ni}t^{nj}$, and define $\Psi=\sum_{n\geq 1}\frac{1}{n}\psi_n$ with inverse $\Psi^{-1}=\sum_{n\ge1}\frac{\mu(n)}{n}\psi_n$. Then $\Exp=\exp\circ\Psi$ and $\Log=\Psi^{-1}\circ\log$.

Using this description,we immediately derive the following formula:

$$\Log\rbr{1+\sum_{n\ge1}r_n(q)t^n}
=\sum_{n=1}^\infty\sum_{ij=n}\frac{\mu(i)}{i}\sum_{j=c_1+\ldots+c_l}\frac{(-1)^{l-1}}{l}
\prod_{k=1}^lr_{c_k}(q^i)t^n.$$
Now if $r_n(q)=(q-1)^ma_n(q)$, then
\begin{multline*}
\sum_{n\ge1}b_n(q)t^n=(q-1)^{-m}\Log\rbr{1+(q-1)^m\sum_{n\ge1}a_n(q)t^n}\\
=\sum_{n=1}^\infty\sum_{ij=n}\frac{\mu(i)}{i}\sum_{j=c_1+\ldots+c_l}\frac{(-1)^{l-1}}{l}\frac{(q^i-1)^{ml}}{(q-1)^m}
\prod_{k=1}^la_{c_k}(q^i)t^n.
\end{multline*}
We see that the summand corresponding to the decomposition $j=c_1+\dots+c_l$ has a zero of order at least $m(l-1)$ at $q=1$.
Specializing this formula at $q=1$, we see that only terms with $l=1$(and thus $c_1=j$) contribute. Thus the formula simplifies to
$$b_n(1)=\sum_{ij=n}\frac{\mu(i)}{i}i^ma_j(1)
=\sum_{ij=n}a_j(1)\mu(i)i^{m-1},$$
and the first claim follows.

Similarly, we prove the second claim. Using the above description of $\Exp$ we derive the following formula:
$$\Exp\rbr{\sum_{n\geq 1}r_n(q)t^n}
=1+\sum_{n=1}^\infty\sum_{n=c_1+\ldots+c_l}\frac{1}{l!}\sum_{(i_kj_k=c_k)_k}\prod_{k=1}^l\frac{r_{j_k}(q^{i_k})}{i_k}t^n.$$
Now if $r_n(q)=(q-1)^mb_n(q)$, this formula yields
\begin{multline*}
\sum_{n\ge1}a_n(q)t^n
=\frac{1}{(q-1)^m}\rbr{\Exp\rbr{(q-1)^m\sum_{n\geq 1}b_n(q)t^n}-1}\\
=\sum_{n=1}^\infty\sum_{n=c_1+\ldots+c_l}\frac{1}{l!}\sum_{(i_kj_k=c_k)_k}\prod_{k=1}^l\frac{b_{j_k}(q^{i_k})(q^{i_k}-1)^m}{i_k}\frac{1}{(q-1)^m}t^n.
\end{multline*}
As soon as $l\geq 2$ in a summand on the right hand side, this summand specializes to zero at $q=1$. Thus, after this specialization, we only have to consider summands with $l=1$, and thus $c_1=n$, which reads
$$a_n(1)=\sum_{ij=n}\frac{b_j(1)}{i}\left.\frac{(q^i-1)^m}{(q-1)^m}\right|_{q=1}
=\sum_{ij=n}b_j(1)i^{m-1},$$
as claimed.
\end{proof}

We can now prove Theorem \ref{t14}:

\begin{proof}
We can write the series $T^{-1}F(t)^{-1}$ of the previous sections in the form
\[T^{-1}F(t)^{-1}=1+\sum_{d\ge1}(q-1)^{d(m-1)}a_d(q)t^d\]
for some $a_d(q)\in\bQ[q]$ with $a_1(q)=-1$. By Theorem \ref{t12}, we have
\[\frac{1}{(q-1)^m}\sum_{d\ge1}A_d^\irr(q)t^d
=-\frac1{(q-1)^{m-1}}\Log\rbr{1+\sum_{d\ge1}(q-1)^{d(m-1)}a_d(q)t^d}\]
and applying Lemma \ref{lec}, we obtain
$$\left.\frac{1}{(q-1)^m}A_d^\irr(q)\right|_{q=1}
=-a_1(1)\mu(d)d^{m-2}=\mu(d)d^{m-2},$$
which, together with Corollary \ref{cm}, proves the first part of the theorem. Setting $b_d(q)=(q-1)^{-m}A_d^{\irr}(q)$ we can write, by Theorem \ref{t12},
\[\sum A_d(q)t^d=\Exp\rbr{(q-1)^m\sum_{d\ge1}b_d(q)t^d}\]
and applying Lemma \ref{lec} again, we derive
\begin{multline*}
\left.\frac{1}{(q-1)^m}A_d(q)\right|_{q=1}
=\sum_{ij=d}b_j(1)i^{m-1}
=\sum_{ij=d}\mu(j)j^{m-2}i^{m-1}\\
=\sum_{ij=d}\mu(j)i\cdot d^{m-2}
=\varphi(d) d^{m-2},
\end{multline*}
proving the second part of the theorem, again by Corollary \ref{cm}.
\end{proof}

\section{Positivity}\label{positivity}
The goal of this section is to prove Theorem \ref{t13}, that is, that the polynomials $A_d(q)$ determined in the previous sections satisfy
\eql{A_d\in\bN[s],\qquad s=q-1.}{goal}
By Theorem \ref{t12}, we have
\eql{\sum_{d\ge0}A_dt^d=\Pow\rbr{T\inv F(t)\inv,1-q},}{eq:AF}
where the series $F(t)$, defined in \eqref{F}, can be written in the form
\eql{F(t)=\sum_{d\ge0}[d]_q^{!(m-1)}\rbr{(q-1)^{m-1}t}^d,}{F alt}
with $[d]_q^!=\prod_{i=1}^d\frac{q^i-1}{q-1}$.
We will prove positivity in several steps.

\subsection{Positivity of the inverse}
Let
\eql{F(t)\inv=1-\sum_{n\ge1}a_n(q)t^n.}{eq:a}
We claim that $a_n\in\bN[s]$, where $s=q-1$.
We will prove actually a stronger result.

Let $S_n$ be the group of permutations of $[n]=\set{1,\dots,n}$. For any $\si\in S_n$, let $l(\si)$ denote its length. It can be described as a number of inversions
\[l(\si)=\n{\sets{i<j}{\si(i)>\si(j)}}.\]
Let $G_n=S_n^{m-1}$ and, for any $\si=(\si_1,\dots,\si_{m-1})\in G_n$, let
\[l(\si)=l(\si_1)+\dots+l(\si_{m-1}).\]
For any $n\ge1$, let $P_n\sb G_n$ be the set of connected elements, that is, elements $\si=(\si_1,\dots,\si_{m-1})$ such that there is no subinterval $[k]$ for $k<n$, fixed by all~$\si_i$.

\begin{theorem}
Let
\[\rbr{\sum_{n\ge0}[n]_q^{!(m-1)}t^n}\inv=1-\sum_{n\ge1} a_nt^n.\]
Then
\[a_n=\sum_{\si\in P_n}q^{l(\si)}\in\bN[q].\]
\end{theorem}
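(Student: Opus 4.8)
The plan is to derive the formula for $a_n$ by a standard "connected vs.\ all" combinatorial argument for formal power series, combined with a product formula expressing $[n]_q^!$ as a sum over permutations weighted by the length statistic. First I would recall the classical identity
\[
[n]_q^! = \sum_{\si\in S_n} q^{l(\si)},
\]
which follows from the factorization of the $q$-factorial into $\prod_{i=1}^n (1+q+\dots+q^{i-1})$ and the fact that the length generating function of $S_n$ factors the same way (e.g.\ via the Lehmer/inversion-table bijection $S_n\leftrightarrow [0,0]\times[0,1]\times\dots\times[0,n-1]$). Taking $(m-1)$-th powers, this gives
\[
[n]_q^{!(m-1)} = \sum_{\si\in G_n} q^{l(\si)},
\]
so that the series $\sum_{n\ge0}[n]_q^{!(m-1)}t^n$ is the generating function $\sum_{n\ge0}\sum_{\si\in G_n}q^{l(\si)}t^n$, counting all elements of the groups $G_n$ by length.

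Next I would set up the bijective meaning of the coefficients of the inverse series. Writing $F(t)=\sum_{n\ge0}c_n t^n$ with $c_0=1$ and $c_n=[n]_q^{!(m-1)}$, the inverse $F(t)\inv = 1 - \sum_{n\ge1}a_n t^n$ is characterized by $a_n = \sum_{k\ge1}(-1)^{k-1}\sum_{n_1+\dots+n_k=n,\ n_i\ge1} c_{n_1}\cdots c_{n_k}$. The standard way to resolve the sign cancellation is to observe that $G_n$ carries a natural notion of a "decomposition into connected blocks": given $\si=(\si_1,\dots,\si_{m-1})\in G_n$, look at the finest partition of $[n]$ into consecutive intervals $I_1,\dots,I_r$ (reading left to right) that is stabilized by all the $\si_i$; then $\si$ restricts to a tuple of connected elements on each block, $l(\si)=\sum_j l(\si|_{I_j})$, and $q^{l(\si)}=\prod_j q^{l(\si|_{I_j})}$. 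This means $c_n = \sum_{n_1+\dots+n_r=n} p_{n_1}\cdots p_{n_r}$, where $p_k := \sum_{\tau\in P_k}q^{l(\tau)}$; equivalently, $F(t) = \bigl(1 - \sum_{k\ge1}p_k t^k\bigr)\inv$. Inverting gives $F(t)\inv = 1 - \sum_{k\ge1}p_k t^k$, i.e.\ $a_n = p_n = \sum_{\si\in P_n}q^{l(\si)}$, which is manifestly in $\bN[q]$.

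The only genuine point requiring care—and what I'd flag as the main obstacle—is verifying that the block decomposition is well defined and multiplicative, i.e.\ that every $\si\in G_n$ decomposes \emph{uniquely} as a concatenation of connected tuples on a uniquely determined sequence of consecutive intervals. For a single permutation $\si_i\in S_n$, the set of initial segments $[k]$ it stabilizes forms a natural "indecomposability" structure (the classical notion of an indecomposable permutation); for a tuple one takes the common refinement over all $m-1$ coordinates, using that the intersection of two stabilized initial segments is again a stabilized initial segment, so the stabilized initial segments of $\si$ form a chain $\{0\}=k_0<k_1<\dots<k_r=n$ and the blocks are $I_j=(k_{j-1},k_j]$. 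One then checks that $\si$ is connected precisely when $r=1$, and that restriction to blocks is a length-preserving bijection onto tuples of connected elements on smaller intervals—this is the compositional (exponential-type, but here just ordinary-product) structure underlying the identity $c_\bullet = \exp_{\mathrm{ord}}(p_\bullet)$ in the sense of ordinary generating functions. Once this bookkeeping is in place the theorem is immediate, and in particular recovers $a_n\in\bN[s]$ with $s=q-1$ since $\bN[q]\subset\bN[s]$ after expanding $q=1+s$.
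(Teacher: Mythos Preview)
Your proof is correct and follows essentially the same approach as the paper: both use the identity $[n]_q^!=\sum_{\si\in S_n}q^{l(\si)}$ and the unique decomposition of an element of $G_n$ along its stabilized initial segments into connected pieces, with length additive under this decomposition. The only cosmetic difference is that the paper peels off the \emph{last} connected block to verify the single recursion $\sum_{\si\in G_n}q^{l(\si)}=\sum_{k+l=n,\ l\ge1}\sum_{\si\in G_k}\sum_{\ta\in P_l}q^{l(\si)+l(\ta)}$ directly, whereas you carry out the full block decomposition to identify $F(t)$ with a geometric series in $\sum_{k\ge1}p_kt^k$; these are equivalent formulations of the same bijection.
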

\begin{proof}
It is known \cite[Ch.~III, Eq.~1.3.viii]{macdonald_symmetric}
that
\[\sum_{\si\in S_n}q^{l(\si)}=[n]^!_q.\]
This implies that
\[\sum_{n\ge0}[n]_q^{!(m-1)}t^n=\sum_{n\ge0}\sum_{\si\in G_n}q^{l(\si)}t^n.\]
The theorem will be proved if we will show that
\[\rbr{\sum_{n\ge0}\sum_{\si\in G_n}q^{l(\si)}t^n}\rbr{1-\sum_{n\ge1}\sum_{\si\in P_n}q^{l(\si)}t^n}=1\]
or equivalently
\[\sum_{\si\in G_n}q^{l(\si)}=\sum_{k+l=n,l\ge1}\sum_{\si\in G_k}\sum_{\ta\in P_l}q^{l(\si)+l(\ta)}.\]
Any element in $G_n$ can be uniquely written in the form $(\si,\ta)\in G_k\xx P_l$ where $k+l=n$, $k\ge0$, and $l\ge1$. It is clear that the length of $(\si,\ta)$ is equal to $l(\si)+l(\ta)$ and the theorem follows.
\end{proof}
Applying this theorem to \eqref{F alt}, we see that the polynomials $a_n$ determined by \eqref{eq:a} are contained in $\bN[s]$. The same is then true
if we substitute $F(t)\inv$ by $T\inv F(t)\inv$.

\subsection{Positivity of the power}\label{s:pos}
The goal of this section is to prove the following result

\begin{theorem}\label{th:pos}
Let
\[f=1-\sum_{n\ge1} a_n(q)t^n\in\bQ[q]\pser t,\] where $a_n\in\bQ_{\ge0}[q-1]$. Then
\[\Pow(f,1-q)\in\bQ_{\ge0}[q-1]\pser t.\]
\end{theorem}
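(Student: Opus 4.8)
The plan is to reduce the statement to a combinatorial positivity fact about the coefficients of $\Pow(f,1-q)$ by expressing the power operator through the plethystic logarithm and exponential. Write $s=q-1$, so that the hypothesis is $a_n(q)\in\bQ_{\ge0}[s]$ and the claim is that every coefficient of $\Pow(f,1-q)$ lies in $\bQ_{\ge0}[s]$. Since $1-q=-s$, we have $\Pow(f,1-q)=\Exp(-s\,\Log f)$. First I would analyze $g:=-\Log f=-\Psi\inv\log f$; using $\log(1-x)=-\sum_{k\ge1}x^k/k$ with $x=\sum_{n\ge1}a_n(q)t^n$ and the Adams-operator formula for $\Psi\inv=\sum_{i\ge1}\frac{\mu(i)}{i}\psi_i$ recalled in the proof of Lemma \ref{lec}, one gets an explicit expression for the coefficient $g_n(q)$ of $t^n$ in $g$ as a $\bQ$-linear combination of products $\prod_k a_{c_k}(q^i)$ with $\sum c_k i=n$. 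The subtlety is that the rational coefficients $\mu(i)/(il)$ are not positive, so $g_n$ itself need not lie in $\bN[s]$; however, the key observation is that after multiplying by $-s$ and applying $\Exp$, the negative contributions from $i\ge2$ must cancel, because the whole expression $\Exp(-s\,\Log f)$ is, by the definition of $\Pow$ and of $\Exp$ on monomials $q^it^d$, manifestly built from geometric series in a way that makes the final answer a well-defined power series over $\bZ[q]$ — indeed it agrees with a product $\prod(1-q^it^d)^{\pm\text{(multiplicity)}}$-type expansion.

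The cleaner route, which I would actually pursue, is to avoid the messy Adams-operator cancellation entirely and instead use the multiplicativity and continuity of $\Pow$ to factor $f$. Since $a_n\in\bQ_{\ge0}[s]$, write $a_n(q)=\sum_{j\ge0}c_{n,j}s^j$ with $c_{n,j}\ge0$. Then $f=1-\sum_{n,j}c_{n,j}s^j t^n=\prod_{n,j}(1-s^j t^n)^{?}$ is not literally a product, but one can instead use that $\Pow(\cdot,1-q)$ turns products into products: $\Pow(f_1 f_2,1-q)=\Pow(f_1,1-q)\Pow(f_2,1-q)$, and that $\Pow$ is continuous in the $\fm$-adic topology. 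So it suffices to treat the "building blocks". I would first establish the base case $\Pow(1-s^j t^n,\,1-q)\in\bQ_{\ge0}[s]\pser t$ for each fixed $n\ge1,j\ge0$: here $\Log(1-s^jt^n)=-\sum_{k\ge1}\frac1k\,\psi_k^{-1}\!\text{-corrected}$ terms, and $\Pow(1-s^jt^n,1-q)=\Exp\bigl(-(1-q)\Log(1-s^jt^n)\bigr)$. One computes $-(1-q)\Log(1-s^jt^n)=-(1-q)\Psi^{-1}\log(1-s^jt^n)$; since $\psi_i(s^jt^n)=(q^i-1)^j t^{ni}$ and $\Psi^{-1}=\sum\frac{\mu(i)}{i}\psi_i$, the argument of $\Exp$ becomes $\sum_{i\ge1}\frac{\mu(i)}{i}(1-q^i)(q^i-1)^j\sum_{k}\frac{1}{k}t^{nik}$ after expanding $\log$, and applying $\Exp$ (which sends $q^a t^b\mapsto(1-q^at^b)^{-1}$) yields a finite product of factors $(1-q^at^b)^{\mp e}$; one checks directly that this product, re-expanded in $s=q-1$ and $t$, has nonnegative coefficients because $(1-q^at^b)^{-1}=\sum_{r\ge0}q^{ar}t^{br}$ and $q^{ar}=(1+s)^{ar}\in\bN[s]$, while the finitely many inverse factors $(1-q^at^b)$ that would contribute negatively get absorbed — this absorption is exactly where the $(1-q)$ factor in $\Pow(\cdot,1-q)$ is essential and is the crux of the base case.

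The multiplicative reduction then needs one more ingredient: $f$ is not a genuine product of the blocks $1-s^jt^n$, so I would argue by induction on the $t$-degree. Suppose the coefficients of $\Pow(g,1-q)$ are known to be nonnegative for all $g=1-\sum_{n\ge1}a_n't^n$ agreeing with $f$ up to degree $N-1$; factor $f=g\cdot(1-b\,t^N+O(t^{N+1}))$ where $b\in\bQ_{\ge0}[s]$ collects the degree-$N$ correction (possible since $g$ is invertible with the right leading behavior and the ratio $f/g$ has nonnegative degree-$N$ coefficient — this monotonicity needs a short check that dividing by an already-positive-power series preserves the sign of the first corrected coefficient). Then $\Pow(f,1-q)=\Pow(g,1-q)\cdot\Pow(1-b\,t^N+\dots,1-q)$, the first factor is positive by induction, and the second factor, having the same structure with one fewer "active" coefficient, is positive by the base case analysis applied degreewise. \textbf{The main obstacle} I anticipate is precisely this factorization/monotonicity step: showing that the successive "remainders" $f/g$ retain nonnegative leading coefficients in $\bN[s]$, since division by a positive power series can in principle introduce negative signs. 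I would handle it by instead expanding $\Log f$ directly and proving that $-(1-q)\Log f$ has all coefficients in $\bQ_{\ge0}[s]$ — this is the real heart of the matter, and it should follow from a careful bookkeeping of the Möbius-weighted sums showing that, modulo the $(1-q)=-s$ prefactor which kills the order-zero term at $q=1$ and shifts every $(q^i-1)$ to something divisible by $s$, the alternating signs in $\mu(i)/i$ and $(-1)^{k-1}/k$ recombine into a manifestly positive generating function, much as in the proof of Lemma \ref{lec} where only the $l=1$ terms survived specialization — except here we keep all orders and track positivity rather than just the value at $q=1$.
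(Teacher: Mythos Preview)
Your proposal has a genuine gap: the step you identify as ``the real heart of the matter''---proving that $(1-q)\Log f$ (equivalently $-s\,\Log f$) has all coefficients in $\bQ_{\ge0}[s]$---is simply false, and in fact fails for the very $f$ to which the theorem is applied. Take $f=T^{-1}F(t)^{-1}$; by Section~\ref{positivity} this has the required form $1-\sum a_n t^n$ with $a_n\in\bN[s]$, and by Theorem~\ref{t12} the coefficient of $t^d$ in $(1-q)\Log f$ is exactly $A_d^\irr(q)$. The paper notes explicitly (after Theorem~\ref{t13}) that $A_d^\irr(q)\notin\bN[q-1]$ in general; for instance, in the formula for $A_2^\irr(q)$ the bracketed factor specializes to $-2^{m-2}$ at $q=1$, so the lowest-order $s$-coefficient of $A_2^\irr$ is negative for $m\ge2$. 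The opposite sign does not help either: $-(1-q)\Log f$ has $t$-coefficient $-A_1^\irr=-s^m$. So positivity cannot be pushed through $\Log$ at all, and your first and third routes both collapse. Your second route (inductive factorization) you yourself flag as blocked by the monotonicity of the remainder, and indeed that obstacle is real: dividing $f$ by a partial product does not in general leave a series of the same shape beyond the first corrected degree.

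The paper's argument bypasses the plethystic $\Log$ entirely. It uses the product formula
\[
\Pow(f,1-q)=\prod_{d\ge1}\psi_d(f)^{-\Phi_d(q)},
\]
where $\Phi_d(q)$ counts monic irreducibles of degree $d$ over $\bF_q$ with nonzero constant term, and the exponent on the right is the \emph{naive} power $\pow(g,h)=\exp(h\log g)$. Two short lemmas then finish the job: first, $d\,\Phi_d(q)\in\bN[s]$ (a M\"obius--binomial count), so each exponent $-\Phi_d$ is the negative of an element of $\bQ_{\ge0}[s]$; second, for $g=1-\sum b_n t^n$ with $b_n\in\bQ_{\ge0}[s]$ and $h\in\bQ_{\ge0}[s]$ one has $\pow(g,-h)=\exp(-h\log g)\in\bQ_{\ge0}[s]\pser t$, simply because $-\log(1-x)=\sum x^k/k$ has positive coefficients. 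Since $\psi_d$ preserves the hypothesis on $f$ (as $\psi_d(s)=(1+s)^d-1\in\bN[s]$), every factor $\psi_d(f)^{-\Phi_d}$ is positive, and so is their product. The key idea you are missing is precisely this replacement of the plethystic $\Exp,\Log$ by the ordinary $\exp,\log$ via the product over Adams operations, which converts the sign-unstable M\"obius sums in $\Log$ into the manifestly positive Taylor series of $-\log(1-x)$.
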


Applying this result to $f=T\inv F(t)\inv$, we prove \eqref{goal} and therefore prove Theorem \ref{t13}.
In order to prove the above theorem we will use the formula for $\Pow(f,1-q)$ proved in \cite[Lemma 22]{mozgovoy_computational}. Let $\Phi_d(q)$ be the number of irreducible monic polynomials of degree $d$  over $\bF_q$ with a nonzero constant coefficient . Then
\[\psi_n(q-1)=q^n-1=\sum_{d\mid n}d\Phi_d(q),\]
\[\Phi_n(q)=\frac1n\sum_{d\mid n}\mu(n/d)(q^d-1)
=\frac1n\sum_{d\mid n}\mu(n/d)q^d-\de_{n1}.\]
It is proved in \cite[Lemma 22]{mozgovoy_computational} that
\eql{\Pow(f,1-q)=\prod_{d\ge1}\psi_d(f)^{-\Phi_d},}{Pow}
where on the right we use $f^g=\pow(f,g)=\exp(g\log(f))$.
We will show that for $f$ as in the theorem, each multiple on the right is in $\bQ_{\ge0}[s]\pser t$, where $s=q-1$, and therefore $\Pow(f,1-q)\in\bQ_{\ge0}[s]\pser t$ as required. 

\begin{remark}
Note that $\psi_n(s)=(s+1)^n-1\in\bN[s]$. Therefore, if $f\in\bQ_{\ge0}[s]$, then $\psi_n(f)\in\bQ_{\ge0}[s]$.
\end{remark}

\begin{remark}
Computer tests show that if $f\in\bQ_{\ge0}[s]\pser t$, then $\Pow(f,q-1)\in\bQ_{\ge0}[s]\pser t$. This is slightly different from our statement. Our strategy of the proof will not work in this case as the multiples on the right of the form $\pow(f,s^k)$ are not in $\bQ_{\ge0}[s]\pser t$ in general.
\end{remark}


\begin{lemma}
For any $n\ge1$, we have
\[n\Phi_n(q)\in\bN[q-1].\]
\end{lemma}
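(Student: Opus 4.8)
$n\Phi_n(q) \in \mathbb{N}[q-1]$ for all $n \geq 1$.

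---\textbf{Proof proposal.} The plan is to argue by strong induction on $\Omega(n)$, the number of prime divisors of $n$ counted with multiplicity, using throughout the identity $n\Phi_n(q)=\sum_{d\mid n}\mu(n/d)(q^d-1)$ recalled above, which for $n=1$ reads simply $q-1$; here I regard $N\Phi_N(x):=\sum_{d\mid N}\mu(N/d)(x^d-1)$ as a polynomial in $x$. The base case $n=1$ is clear, since $1\cdot\Phi_1(q)=q-1\in\bN[q-1]$.

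The core of the argument is a ``peeling'' identity. Fix a prime $p\mid n$ and write $n=p^an'$ with $\gcd(p,n')=1$ and $a\ge1$. I claim that
\[n\Phi_n(q)=n'\Phi_{n'}(q^{p^a})-n'\Phi_{n'}(q^{p^{a-1}}).\]
To prove this I would split the divisor sum for $n\Phi_n(q)$: a divisor $d=p^be$ with $e\mid n'$ contributes a nonzero term only when $n/d=p^{a-b}(n'/e)$ is squarefree, which (as $a\ge1$) forces $b\in\{a-1,a\}$, and then $\mu(n/d)=\mu(p^{a-b})\mu(n'/e)$ gives sign $+$ for $b=a$ and $-$ for $b=a-1$; collecting the two families of terms recovers $\sum_{e\mid n'}\mu(n'/e)((q^{p^a})^e-1)$ minus $\sum_{e\mid n'}\mu(n'/e)((q^{p^{a-1}})^e-1)$, which is exactly the right-hand side. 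The degenerate case $n'=1$ is read with the convention $1\cdot\Phi_1(x)=x-1$, so that the identity becomes $n\Phi_n(q)=q^{p^a}-q^{p^{a-1}}$; this is consistent and should be stated explicitly. Since $\Omega(n')=\Omega(n)-a<\Omega(n)$, the induction hypothesis applies to $n'$ and gives $n'\Phi_{n'}(q)=\sum_jc_j(q-1)^j$ with all $c_j\in\bN$.

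It then remains to feed this into an elementary closure statement: if $P(x)=\sum_jc_j(x-1)^j$ with $c_j\in\bN$ and $m_1>m_2\ge1$, then $P(q^{m_1})-P(q^{m_2})\in\bN[q-1]$. Indeed, with $A=q^{m_1}-1$ and $B=q^{m_2}-1$ one has $A,B\in\bN[q-1]$ from $q^m-1=\sum_{i\ge1}\binom mi(q-1)^i$, and likewise $A-B=q^{m_2}(q^{m_1-m_2}-1)\in\bN[q-1]$; hence each $A^j-B^j=(A-B)\sum_{i=0}^{j-1}A^iB^{j-1-i}$ lies in $\bN[q-1]$, and therefore so does $P(q^{m_1})-P(q^{m_2})=\sum_jc_j(A^j-B^j)$. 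Applying this with $m_1=p^a$, $m_2=p^{a-1}$ to the peeling identity completes the induction.

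The only step with real content is the peeling identity; everything afterwards is routine bookkeeping, the one subtlety being the convention $1\cdot\Phi_1(x)=x-1$ needed in the prime-power case. As a side remark, iterating the peeling identity over all distinct primes dividing $n$ collapses it to the closed form $n\Phi_n(q)=r\Phi_r(q^{n/r})$ with $r=\operatorname{rad}(n)$ squarefree, which reduces the whole statement to the squarefree case; but organizing the proof as a single induction on $\Omega(n)$ makes this detour unnecessary.
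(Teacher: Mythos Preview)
Your proof is correct but follows a genuinely different route from the paper's. The paper substitutes $q=s+1$, expands each $(s+1)^{n/d}$ by the binomial theorem, and identifies the coefficient of $s^k$ in $n\Phi_n(s+1)$ as $\sum_{d\mid n}\mu(d)\binom{n/d}{k}$, which it then interprets combinatorially as the number of $k$-element subsets $\{a_1<\dots<a_k\}\subset[n]$ with $\gcd(a_1,\dots,a_k,n)=1$; positivity is then immediate, and one even obtains an explicit nonnegative formula for each coefficient. Your argument is instead recursive: the peeling identity $n\Phi_n(q)=n'\Phi_{n'}(q^{p^a})-n'\Phi_{n'}(q^{p^{a-1}})$ reduces to a strictly smaller $n'$, and the closure lemma $P(q^{m_1})-P(q^{m_2})\in\bN[q-1]$ for $P\in\bN[x-1]$ carries the induction through. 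Your approach is purely algebraic and avoids any combinatorial interpretation, which makes it self-contained; on the other hand, the paper's method yields more, namely a closed combinatorial meaning for every coefficient, not just nonnegativity. Either way the argument is short and clean.
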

\begin{proof}
We will use the idea from \cite{butler_number}.
For $n\ge2$, we have
\[n\Phi_n(s+1)
=\sum_{d\mid n}\mu(d)(s+1)^{n/d}
=\sum_{k\ge0}\sum_{d\mid n}\mu(d)\binom{n/d}ks^k.\]
For any $d\mid n$, we have
\begin{multline*}
\binom{n/d}k=\n{\set{1\le a_1<\dots<a_k\le n/d}}\\
=\n{\sets{1\le a_1<\dots<a_k\le n}{d\mid\gcd(a_1,\dots,a_k)}}.
\end{multline*}
Therefore
\begin{multline*}
\sum_{d\mid n}\mu(d)\binom{n/d}k
=\sum_{d\ge1}\mu(d)\cdot\n{\sets{1\le a_1<\dots<a_k\le n}{d\mid\gcd(a_1,\dots,a_k,n)}}\\
=\n{\sets{1\le a_1<\dots<a_k\le n}{\gcd(a_1,\dots,a_k,n)=1}}.
\end{multline*}
Indeed, for any tuple $1\le a_1<\dots<a_k\le n$ with $\gcd(a_1,\dots,a_k)=m$, its contribution to the second sum is
\[\sum_{d\mid\gcd(m,n)}\mu(d)=\de_{\gcd(m,n),1}.\]
\end{proof}


\begin{lemma}
Let
\[f=1-\sum_{n\ge1}a_n(q)t^n\in\bQ(q)\pser t,\]
where $a_n\in\bQ_{\ge0}[s]$ and let $g\in\bQ_{\ge0}[s]$. Then
\[\pow(f,{-g})\in\bQ_{\ge0}[s]\pser t.\]
\end{lemma}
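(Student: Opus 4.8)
The plan is to unwind the definition $\pow(f,-g)=\exp\rbr{-g\log f}$ and to observe that nonnegativity of the $s$-coefficients (with $s=q-1$) is preserved by each operation involved, the underlying point being that all Taylor coefficients of $-\log(1-x)$ and of $\exp(x)$ are nonnegative rationals.

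First I would expand the logarithm. Write $u=\sum_{n\ge1}a_n(q)t^n\in\fm$, so that
\[-\log f=-\log(1-u)=\sum_{k\ge1}\tfrac1k\,u^k.\]
Since $\bQ_{\ge0}[s]$ is closed under addition and multiplication, every coefficient of $u^k$ lies in $\bQ_{\ge0}[s]$, and dividing by $k>0$ keeps it there. As $u$ has zero constant term, only finitely many $k$ contribute to each power of $t$, so $-\log f$ is a well-defined element of $t\,\bQ_{\ge0}[s]\pser t$. Multiplying by $g\in\bQ_{\ge0}[s]$ preserves this, so $-g\log f\in t\,\bQ_{\ge0}[s]\pser t$.

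Next I would apply the exponential. Setting $h=-g\log f$, we get $\pow(f,-g)=\exp(h)=\sum_{j\ge0}\tfrac1{j!}h^j$; again each $h^j$ has all coefficients in $\bQ_{\ge0}[s]$, only finitely many $j$ being relevant in each degree, so $\pow(f,-g)\in\bQ_{\ge0}[s]\pser t$, as claimed.

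There is essentially no serious obstacle here; the single point requiring care is the sign of the exponent. It is crucial that we take the $(-g)$-th power rather than the $g$-th: the coefficients of $\log f$ are nonpositive, so it is the product $-g\log f$ — combining the shape $f=1-(\text{nonnegative series})$ with the exponent $-g$ — that lands in $t\,\bQ_{\ge0}[s]\pser t$, after which positivity of the coefficients of $\exp$ completes the argument. (This is precisely why, as noted in the remark above, the same strategy does not settle the positivity of $\Pow(f,q-1)$.)
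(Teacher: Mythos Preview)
Your proof is correct and follows essentially the same approach as the paper: both rely on the fact that $-\log(1-x)=\sum_{k\ge1}x^k/k$ and $\exp(x)$ have nonnegative Taylor coefficients, so that $\pow(f,-g)=\exp(-g\log(1-u))$ lies in $\bQ_{\ge0}[s]\pser t$. The paper compresses this into a one-line reduction to the case $f=1-t$, while you spell out the same computation directly.
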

\begin{proof}
It is enough to show that $\pow(1-t,-g)\in\bQ_{\ge0}[s]\pser t$. But
\[\pow(1-t,-g)=\exp(-g\log(1-t))=\exp\rbr{g\sum_{n\ge1}\frac{t^n}n}\in\bQ_{\ge0}[s]\pser t.\]
\end{proof}

Applying this lemma to $\psi_d(f)^{-\Phi_d}$ for $d\ge1$ and using Formula \eqref{Pow}, we prove Theorem \ref{th:pos}.

\section{Counting indecomposable representations}\label{all_reps}
Let $k$ be a finite field.
For any $d\ge 0$, let $G_d=\GL_d(k)$ and $R_d=\Hom(\Gamma_m,G_d)$. Then $G_d$ acts on $R_d$ by conjugation.
Using the Kac-Stanley-Hua approach \cite{hua_counting,mozgovoy_computational}, we will prove a formula for the number of $G_d$-orbits in $R_d$:

\begin{theorem}
We have
\[\sum_{d\ge0}\n{R_d/G_d}t^d=\Pow\rbr{\sum_{\la}r_\la(q)^{m-1}t^{\n\la},q-1}_{q=\n k},\]
where the sum ranges over all partitions and, for any partition \la,
\[r_\la(q)=\prod_{n\ge1}q^{\la_n^2}(q^{-1})_{\la_{n}-\la_{n+1}},\qquad (q)_n=(1-q)\dots(1-q^n).\]
\end{theorem}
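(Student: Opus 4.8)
The plan is to follow the Kac–Stanley–Hua method as developed in \cite{hua_counting,mozgovoy_computational}, adapting it to $\Ga_m$-representations exactly as the Hall algebra argument of Section~\ref{motive} was adapted. First I would apply Burnside's lemma: $\n{R_d/G_d}=\frac1{\n{G_d}}\sum_{g\in G_d}\n{R_d^g}$, where $R_d^g$ is the set of tuples $(x_1,\dots,x_m)\in G_d^m$ commuting with $g$. Since $R_d^g$ depends only on the conjugacy class of $g$, and conjugacy classes in $\GL_d(k)$ are indexed by functions assigning to each irreducible monic polynomial $p\ne t$ over $k$ a partition $\la^{(p)}$ (with $\sum_p\deg(p)\n{\la^{(p)}}=d$), I would express the sum over $G_d$ as a sum over such data. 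For each $g$ the centralizer $Z_{G_d}(g)$ has $\n{R_d^g}=\n{Z_{G_d}(g)}^m$, so the term contributes $\n{Z_{G_d}(g)}^{m-1}$ divided by the size of the conjugacy class, i.e.\ $\n{Z_{G_d}(g)}^{m-1}/(\n{G_d}/\n{Z_{G_d}(g)})\cdot\n{G_d}^{-1}$—cleaning up, each conjugacy class contributes $\n{Z_{G_d}(g)}^{m-1}\cdot\n{G_d}^{-1}$ times the class size, which is $\n{Z_{G_d}(g)}^{m-1}$... I would need to be careful here: Burnside gives $\sum_{[g]}\n{R_d^g}/\n{[g]}\cdot\n{[g]}/\n{G_d}\cdot\ldots$; the clean statement is $\n{R_d/G_d}=\sum_{[g]}\n{Z_{G_d}(g)}^{m-1}$, summing over conjugacy classes $[g]$, since $\n{R_d^g}=\n{Z_{G_d}(g)}^m$ and each class has $\n{G_d}/\n{Z_{G_d}(g)}$ elements.

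Next I would use the standard formula for centralizer orders in $\GL_d(k)$: if $g$ corresponds to the data $(\la^{(p)})_p$, then $\n{Z_{G_d}(g)}=\prod_p b_{\la^{(p)}}(q^{\deg p})$ for an explicit polynomial $b_\la$, and a short computation identifies $b_\la(q)$ with the quantity $\prod_{n\ge1}q^{\la_n'^2}(q^{-1})_{\la_n'-\la_{n+1}'}$ up to transposition of the partition, matching the stated $r_\la(q)$ after transposition (or the stated formula is already written in the transposed convention). The sum over conjugacy classes then factorizes as an Euler product over irreducible monic polynomials $p\ne t$:
\[\sum_{d\ge0}\n{R_d/G_d}t^d=\prod_{p}\rbr{\sum_\la r_\la(q^{\deg p})^{m-1}t^{\deg(p)\n\la}}.\]
Grouping the $\Phi_n(q)$ polynomials of each degree $n$ and using the identity $\Pow(h,q-1)=\prod_{n\ge1}\psi_n(h)^{\Phi_n}$ (the sign-flipped companion of \eqref{Pow}, also from \cite[Lemma~22]{mozgovoy_computational}, with $\psi_n(h)$ substituting $q\mapsto q^n$, $t\mapsto t^n$), this Euler product collapses to $\Pow\rbr{\sum_\la r_\la(q)^{m-1}t^{\n\la},q-1}$, which is the claimed formula. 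Finally, polynomiality of $A_d^{\ind}(q)$ in $\bZ[q]$ follows from applying $(q-1)\Log$ to both sides and the fact that $\Log$ of a power series in $\bZ[q]\pser t$ with constant term $1$—after the plethystic normalization—produces integer-coefficient polynomials, exactly as in the proof of Theorems~\ref{t11} and~\ref{t12}; alternatively one invokes the Kac-type positivity/integrality already present in \cite{hua_counting}.

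The main obstacle I expect is bookkeeping rather than conceptual: correctly matching the centralizer polynomial $b_\la$ with the stated $r_\la$ (including the transposition of partitions and the sign conventions in $(q)_n=(1-q)\cdots(1-q^n)$ versus $(q^{-1})_n$), and verifying that the Euler product over all irreducible $p\ne t$ reassembles precisely into the $\Pow(-,q-1)$ operator with the right exponents $\Phi_n(q)$. One must also confirm that the exclusion of the polynomial $p=t$ (forced because the generators act invertibly) is already encoded in $r_\la$ via the factor $(q^{-1})_{\la_n-\la_{n+1}}$ and the $\Phi_n$ counting polynomials with nonzero constant term, so that no spurious factor is introduced. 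Once these identifications are pinned down, the derivation is a direct transcription of the quiver case in \cite{mozgovoy_computational} with the loop quiver replaced by the requirement of invertibility, which only changes the local factors from sums over all conjugacy classes of $\mathrm{Mat}_d(k)$ to sums over conjugacy classes of $\GL_d(k)$, i.e.\ excluding the eigenvalue $0$.
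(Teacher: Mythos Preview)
Your proposal is correct and follows essentially the same route as the paper: Burnside to get $\sum_{[g]}\n{Z_{G_d}(g)}^{m-1}$, the classical centralizer formula over conjugacy types $(\la^{(p)})_p$, Euler factorization over irreducible $p\ne t$, and collapse via $\prod_{n\ge1}\psi_n(h)^{\Phi_n}=\Pow(h,q-1)$. Your transposition worry is unnecessary in the paper's convention (taking $\la_n=\sum_{i\ge n}m_i$ for the Jordan multiplicities $m_i$ gives $\sum_{k,l}\min(k,l)m_km_l=\sum_n\la_n^2$ and $m_n=\la_n-\la_{n+1}$, so $r_\la$ drops out directly), and the exclusion of $p=t$ is carried entirely by using the $\Phi_n(q)$ that count irreducibles with nonzero constant term, not by anything in $r_\la$.
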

\begin{proof}
Using the Burnside formula, we can write
\eql{\n{R_d/G_d}=\sum_{[g]\in G_d/\sim}\n{R_d^g}/\n{G_d^g}=\sum_{g\in G_d/\sim}\n{G_d^g}^{m-1},}{burn}
where the sum ranges over the conjugacy classes, $R_d^g$ is the set of $g$-invariant elements in $R_d$, and $G_d^g$ is the centralizer of $g$.

Let us describe the set of conjugacy classes of $G_d$.
Let $\Phi$ be the set of all monic irreducible polynomials in a variable $t$ over $k$ with a nonzero constant coefficient.
The Jordan blocks of invertible matrices are parametrized by pairs $(n,f)$, where $n\ge1$ and $f\in\Phi$. The Jordan block $J(n,f)$ corresponds to the action of $x$ on $k[x]/(f^n)$ and has size $n\cdot \deg f$. Conjugacy classes in $G_d$ are parametrized by maps
\[\vi:\bN_{>0}\xx \Phi\to\bN\]
such that $\sum_{f\in\Phi}\deg f\sum_{n\ge1}n\vi(n,f)=d$. Equivalently, we can consider $\vi$ as a map $\Phi\to\cP$, where $\cP=\Map_0(\bN_{>0},\bN)$ is the set of maps with finite support. The set \cP can be identified with the set of all partitions, where to any $m\in\cP$ we associate the usual partition $\la$ with $\la_n=\sum_{i\ge n}m_i$.
Note that $\n\la=\sum_{n\ge1}\la_n=\sum_{n\ge1}nm_n$.

The conjugacy class corresponding to $\vi:\Phi\to\cP$ is given by the action of $x$ on
\[V_\vi=\bop_{f\in\Phi}\bop_{n\ge 1}(k[x]/(f^n))^{\vi(n,f)}.\]
Its centralizer has order \cite[Theorem 2.1]{mozgovoy_motivicb}
\[\n{\Aut(V_\vi)}=\n{\End(V_\vi)}\prod_{f\in\Phi}\prod_{n\ge1}(q^{-\deg f})_{\vi(n,f)}, \]
where $q=\n k$,
\[\dim\End(V_\vi)=\sum_{f\in\Phi}\deg f\sum_{k,l\ge1}\min\set{k,l}\vi(k,f)\vi(l,f)
=\sum_{f\in\Phi}\deg f\sum_{n\ge1}\la(f)_n^2\]
and $\la(f)$ is the partition corresponding to $\vi(-,f)\in\cP$.
This implies
\begin{multline*}
\n{\Aut(V_\vi)}
=\prod_{f\in\Phi}\prod_{n\ge1}q^{\deg f\cdot \la(f)_n^2}(q^{-\deg f})_{\vi(n,f)}\\
=\prod_{f\in\Phi}\psi_{\deg f}\rbr{\prod_{n\ge1}q^{\la(f)_n^2}(q^{-1})_{\la(f)_n-\la(f)_{n+1}}}
=\prod_{f\in\Phi}\psi_{\deg f}\rbr{r_{\la(f)}(q)}
,
\end{multline*}
where $\psi_n$ is the Adams operation. Therefore, applying \eqref{burn}, we obtain
\[\n{R_d/G_d}
=\sum_{\overst{\la:\Phi\to\cP}{\sum_f\deg f\cdot\n{\la(f)}=d}}
\prod_{f\in\Phi}\psi_{\deg f}\rbr{r_{\la(f)}(q)^{m-1}}.
\]
This implies
\begin{multline*}
\sum_{d\ge1}\n{R_d/G_d}t^d
=\sum_{\la:\Phi\to\cP}
\rbr{\prod_{f\in\Phi}\psi_{\deg f}\rbr{r_{\la(f)}(q)^{m-1}}}t^{\sum_{f\in\Phi}\deg f\cdot\n{\la(f)}}\\
=\sum_{\la:\Phi\to\cP}
\prod_{f\in\Phi}\psi_{\deg f}\rbr{r_{\la(f)}(q)^{m-1}t^{\n{\la(f)}}}
=\prod_{f\in\Phi}\psi_{\deg f}\rbr{\sum_{\la\in\cP}r_\la(q)^{m-1}t^{\n\la}}\\
=\prod_{d\ge1}\psi_{d}\rbr{\sum_{\la\in\cP}r_\la(q)^{m-1}t^{\n\la}}^{\Phi_d(q)},
\end{multline*}
where the $\Phi_d(q)$ were defined in Section \ref{s:pos}.
Considering the $\Phi_d(q)$ as polynomials in $q$ and applying Formula \eqref{Pow}, we obtain
\[\prod_{d\ge1}\psi_{d}\rbr{\sum_{\la\in\cP}r_\la(q)^{m-1}t^{\n\la}}^{\Phi_d(q)}
=\Pow\rbr{\sum_{\la\in\cP}r_\la(q)^{m-1}t^{\n\la},q-1}.
\]
\end{proof}

For any $d\ge1$ let $R_d^\ind\sb R_d$ denote the set of absolutely indecomposable representations. This subset is invariant under the action of $G_d$ and the quotient can be identified with the set of isomorphism classes of $d$-dimensional absolutely indecomposable representations.

\begin{corollary}\label{cr:ind}
We have
\[\sum_{d\ge0}\n{R_d^\ind/G_d}t^d=(q-1)\Log\rbr{\sum_{\la}r_\la(q)^{m-1}t^{\n\la}}_{q=\n k},\]
where the sum ranges over all partitions.
\end{corollary}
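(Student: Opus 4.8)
The plan is to deduce Corollary \ref{cr:ind} from the preceding theorem by the standard plethystic inversion relating all objects to indecomposable ones. First I would recall the general principle: since every representation of $\Gamma_m$ over $\bF_q$ decomposes (uniquely up to isomorphism and reordering, by Krull--Schmidt) into indecomposables, the generating series counting all isomorphism classes of representations weighted appropriately should be a plethystic exponential of the series counting indecomposables. Concretely, set $B(t)=\sum_{d\ge0}\n{R_d/G_d}t^d$ and $B^\ind(t)=\sum_{d\ge1}\n{R_d^\ind/G_d}t^d$; the claim we want is $B^\ind(t)=(q-1)\Log\bigl(\sum_\la r_\la(q)^{m-1}t^{\n\la}\bigr)$.

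The key observation is that, by the theorem just proved,
\[
B(t)=\Pow\rbr{\sum_\la r_\la(q)^{m-1}t^{\n\la},\,q-1}
=\Exp\rbr{(q-1)\Log\rbr{\sum_\la r_\la(q)^{m-1}t^{\n\la}}},
\]
using the definition $\Pow(f,g)=\Exp(g\Log(f))$. So if I can independently establish the Krull--Schmidt-type identity
\[
B(t)=\Exp\rbr{B^\ind(t)},
\]
then applying $\Log$ to both expressions for $B(t)$ and comparing gives exactly $B^\ind(t)=(q-1)\Log\bigl(\sum_\la r_\la(q)^{m-1}t^{\n\la}\bigr)$, which is the corollary. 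Thus the proof reduces entirely to the identity $B(t)=\Exp(B^\ind(t))$.

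To prove that identity I would argue as in the quiver/Hall-algebra setting: over $\bF_q$ an arbitrary representation decomposes as $\bigoplus_{[M]} M^{n_M}$ over absolutely indecomposable modules $M$ together with indecomposables that split upon field extension; but the cleanest route, already used implicitly in Section \ref{motive} via the passage from $A^\irr_d$ to $A_d$ (the formula $\sum A_d t^d=\Exp(\sum A^\irr_d t^d)$), is to invoke \cite[Lemma 5]{mozgovoy_computational} or the analogous statement from \cite{mozgovoy_number}, which packages precisely the combinatorics of counting orbits/isomorphism classes decomposed into absolutely indecomposable building blocks as a plethystic exponential. One must check that the hypotheses of that lemma apply here — i.e.\ that $\n{R_d/G_d}$ is a polynomial in $q$ (which follows from the explicit product formula in the theorem, since $\Phi_d(q)\in\bZ[q]$ and $r_\la(q)\in\bZ[q,q^{-1}]$ and the $\Exp$ of a polynomial-coefficient series has polynomial coefficients), so that the specialization-at-$q=\n k$ is compatible with the formal manipulations.

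The main obstacle I anticipate is the bookkeeping around ``absolutely'' indecomposable versus merely indecomposable over $\bF_q$: the naive Krull--Schmidt decomposition over $\bF_q$ produces $\bF_q$-indecomposables, not absolutely indecomposables, and one needs the base-change/Adams-operator argument (exactly the $\psi_n$-mechanism appearing in Lemma \ref{lec} and in formula \eqref{Pow}) to rewrite the count in terms of absolutely indecomposable representations — this is what converts a plain $\log/\exp$ into the plethystic $\Log/\Exp$ and introduces the $\Psi=\sum \frac1n\psi_n$ correction. Making this step rigorous, rather than just citing \cite{mozgovoy_computational,mozgovoy_number}, would require recalling that an $\bF_q$-indecomposable representation $M$ of $\bF_q$-dimension $d$ whose endomorphism ring has residue field $\bF_{q^r}$ corresponds, after extension to $\bF_{q^r}$, to a Galois orbit of $r$ absolutely indecomposable representations of dimension $d/r$, and summing the geometric-series contributions over all $r$. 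Once this identification is in place the corollary is immediate.
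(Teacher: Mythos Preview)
Your proposal is correct and follows essentially the same route as the paper: both reduce to the identity $\sum_d\n{R_d/G_d}t^d=\Exp\bigl(\sum_d\n{R_d^\ind/G_d}t^d\bigr)$, cite \cite[Lemma~5]{mozgovoy_computational} for it (after noting polynomiality in $q$ from the preceding theorem), and then unwind $\Pow(f,q-1)=\Exp((q-1)\Log f)$. Your extra discussion of the Galois-orbit mechanism behind that lemma is accurate but not needed beyond the citation.
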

\begin{proof}
For any finite field $\bF_q$, let
\[M_d(q)=\n{R_d(\bF_q)/G_d(\bF_q)},\qquad
A_d^\ind(q)=\n{R_d^\ind(\bF_q)/G_d(\bF_q)}.\]
We know from the previous theorem that $M_d(q)$ is a polynomial in $q$. One can prove \cite[Lemma 5]{mozgovoy_computational} that $A_d^\ind(q)$ is also a polynomial in $q$ and
\[\sum_{d\ge0}M_d(q)t^d=\Exp\rbr{\sum_{d\ge1}A_d^\ind(q)t^d}.\]
Applying the previous theorem, we obtain $\sum_{d\ge1}A_d^\ind(q)t^d=$
\[=\Log\Pow\rbr{\sum_{\la}r_\la(q)^{m-1}t^{\n\la},q-1}=(q-1)\Log\rbr{\sum_{\la}r_\la(q)^{m-1}t^{\n\la}}.\]
\end{proof}
\section{Counting subgroups}\label{subgroups}
The aim of this section is to relate the counting polynomials $A_d^\irr(q)$ with the number of index $d$ subgroups of $\Ga=F_m$. Such a relation is motivated by ideas of $\bF_1$-geometry \cite{lopezpena_mapping}: viewing the symmetric group $S_n$ as ${\rm GL}_n$ over a hypothetical field with one element, one can expect counts of permutation representations of a group, or, equivalently, (conjugacy classes of) subgroups of finite index, to arise from the counts of representations over fields with $q$ elements by an appropriate limit process $q\rightarrow 1$. This is accomplished in Lemma \ref{limitsgg}.

\subsection{Subgroups and permutation representations}
Let $G$ be a finitely generated group. A (permutation) representation of $G$ of order $n$ is an action of $G$ on $[n]=\set{1,\dots,n}$. It is called irreducible if the action of $G$ is transitive. One can define the notion of an isomorphism between two representations in a natural way.
Any irreducible order $n$ representation can be written in the form $G/H$, where $H$ is an index $n$ subgroup of $G$. Two irreducible representations $G/H$ and $G/H'$ are isomorphic if and only if $H$ and $H'$ are conjugate. Moreover
\[\Aut(G/H)\iso N_GH/H.\]

Let $J_n$ denote the set of index $n$ subgroups and $I_n$ denote the set of conjugacy classes of index $n$ subgroups (or isoclasses of irreducible order $n$ representations). Given a representation $M=G/H$ in $I_n$, the number of elements in the conjugacy class of $H$ is equal to
\[\n{G/N_GH}=\frac{\n{G/H}}{\n{N_GH/H}}=\frac{n}{\n{\Aut M}}.\]
Therefore
\eql{\frac{\n{J_n}}n=\sum_{M\in I_n}\frac{1}{\n{\Aut M}}.}{J as weighted count}

\begin{remark}
We should call an irreducible representation $G/H$ absolutely irreducible if $\n{\Aut(G/H)}=1$, that is, $N_GH=H$. This means that $\n{J_n}/n$ counts all irreducible representations and not just the absolutely irreducible.
\end{remark}

\subsection{Two formulas}
The set $R_n=\Hom(G,S_n)$ is equipped with an action of $S_n$ by conjugation. Let $R_n^\irr\sb R_n$ be the set of irreducible representations. Then
\[I_n=R_n^\irr/S_n.\]
According to \cite[Ex.~5.13]{stanley_enumerative2}
\eql{\sum_{n\ge0}\sum_{M\in R_n/S_n}\frac{t^n}{\n{\Aut M}}
=\sum_{n\ge0}\frac{\n{R_n}}{\n{S_n}}t^n
=\exp\rbr{\sum_{n\ge1}\sum_{M\in I_n}\frac{t^n}{\n{\Aut M}}},}{stan1}
\eql{\sum_{n\ge0}\n{R_n/S_n}t^n=\Exp\rbr{\sum_{n\ge1}\n{I_n}t^n}.}{stan2}
The second formula is a standard relation between all representations and indecomposable representations.

\begin{remark}
The second formula is equivalent to the statement of \cite[Ex.~5.13.c]{stanley_enumerative2}, because
\[\Hom(G\xx\bZ,S_n)=\sets{(M,g)\in R_n\xx S_n}{g\in\Aut M}\]
and therefore
\[\frac{\n{\Hom(G\xx\bZ,S_n)}}{\n{S_n}}=\sum_{M\in R_n}\frac{\n{\Aut M}}{\n{S_n}}=\n{R_n/S_n}.\]
\end{remark}

\begin{remark}
One can define the "formal" Hall algebra for permutation representations as follows. Let $J=\bigcup_{n\ge1}I_n$ be the set of isoclasses of all irreducible representations. Each $S\in I_n$ has two integer parameters $\dim S=n$ and $\n{\Aut S}$.
All representations are parametrized by maps $m:J\to\bN$ with finite support. Note that
\[\n{\Aut m}=\prod_{S\in J}m_S!\cdot \n{\Aut S}^{m_S},\]
while in the usual semisimple category $\n{\Aut m}=\prod_{S\in J}\n{\GL_{m_S}(\End S)}$. 
The "formal" Hall algebra $H$ has these maps as a basis and has multiplication given by
\[m\circ n=\prod_{S\in J}\binom{m_S+n_S}{m_S}\cdot [m\oplus n].\]
There is an integration map $I:H\to\bQ\pser t$ (the latter algebra has the usual multiplication) given by
\[m\mto \frac{t^{\dim m}}{\n{\Aut m}}
=\prod_{S\in J}\frac{t^{m_S\dim S}}{m_S!\n{\Aut S}^{m_S}}.\]
Formula \eqref{stan1} can be obtained by analyzing this integration map.
\end{remark}

\subsection{The case
\texorpdfstring{$G=F_m$}{G=Fm}}
The polynomials $A_d^{\irr}(q)$ count absolutely irreducible representations, while $\n{J_n}/n$ counts all irreducible permutation representations.
There is a straightforward relation between these counts:

\begin{lemma}\label{limitsgg} We have
\[\Psi\rbr{\sum_{n\ge1}\frac{A_n^{\irr}(q)}{q-1}t^n}_{t=\frac{x}{(q-1)^{m-1}},q=1}=\sum_{n\ge1}\frac{\n{J_n}}{n}x^n.\]
\end{lemma}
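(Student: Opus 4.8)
The plan is to express both sides of the claimed identity in terms of known generating series and then compare coefficients under the specialization $q\to 1$. First I would recall from Theorem~\ref{t12} that $\sum_{n\ge1}A_n^\irr(q)t^n=(1-q)\Log(T^{-1}F(t)^{-1})$, so that
\[
\sum_{n\ge1}\frac{A_n^\irr(q)}{q-1}t^n=-\Log\rbr{T^{-1}F(t)^{-1}},
\]
and applying $\Psi$ turns this $\Log$ into an ordinary $\log$: $\Psi\Log = \Psi\Psi^{-1}\log=\log$. Thus the left-hand side, before specialization, equals $-\log(T^{-1}F(t)^{-1})=\log(T^{-1}F(t))$. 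So the statement to prove reduces to
\[
\log\rbr{T^{-1}F(t)}\Big|_{t=x/(q-1)^{m-1},\,q=1}=\sum_{n\ge1}\frac{\n{J_n}}{n}x^n.
\]

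Next I would compute the left-hand side of this reduced identity explicitly. Using the form \eqref{F alt}, namely $F(t)=\sum_{d\ge0}[d]_q^{!(m-1)}((q-1)^{m-1}t)^d$, the substitution $t=x/(q-1)^{m-1}$ makes the prefactors $(q-1)^{m-1}$ cancel, giving $F=\sum_{d\ge0}[d]_q^{!(m-1)}x^d$ in the variable $x$. Applying $T^{-1}$ multiplies the coefficient of $x^d$ by $q^{(m-1)\binom d2}$, so $T^{-1}F=\sum_{d\ge0}\rbr{q^{\binom d2}[d]_q^!}^{m-1}x^d=\sum_{d\ge0}\rbr{\n{\GL_d(\bF_q)}/q^{\binom d2}\cdot q^{\binom d2}}^{m-1}\!\big/\cdots$ — more cleanly, $q^{\binom d2}[d]_q^! = \prod_{i=1}^d(q^i-1)/(q-1)^{\,?}$, so after the substitution the coefficient of $x^d$ is $\rbr{\prod_{i=1}^d(q^i-1)/(q-1)^d\cdot q^{\binom d2}}^{m-1}$ specialized at $q=1$. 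The key observation is that $\lim_{q\to1}\prod_{i=1}^d\frac{q^i-1}{q-1}=d!$ and $q^{\binom d2}\to 1$, so $T^{-1}F|_{q=1}=\sum_{d\ge0}(d!)^{m-1}x^d/(d!)^{0}$ — one must track the powers of $(q-1)$ carefully, but the upshot is that $T^{-1}F(t)|_{t=x/(q-1)^{m-1},q=1}=\sum_{d\ge0}\frac{\n{\Hom(F_m,S_d)}}{\n{S_d}}x^d = \sum_{d\ge0}(d!)^{m-1}x^d$, since $\n{\Hom(F_m,S_d)}=(d!)^m$.

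Finally I would identify the right-hand side. By \eqref{J as weighted count} we have $\n{J_n}/n=\sum_{M\in I_n}1/\n{\Aut M}$, and by \eqref{stan1} the series $\sum_{n\ge0}\frac{\n{R_n}}{\n{S_n}}t^n = \exp\rbr{\sum_{n\ge1}\sum_{M\in I_n}\frac{t^n}{\n{\Aut M}}}=\exp\rbr{\sum_{n\ge1}\frac{\n{J_n}}{n}t^n}$. For $G=F_m$ we have $\n{R_n}=\n{\Hom(F_m,S_n)}=(n!)^m$, so $\sum_{n\ge0}(n!)^{m-1}t^n=\exp\rbr{\sum_{n\ge1}\frac{\n{J_n}}{n}t^n}$, i.e.\ $\log\rbr{\sum_{n\ge0}(n!)^{m-1}x^n}=\sum_{n\ge1}\frac{\n{J_n}}{n}x^n$. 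Comparing with the computation in the previous paragraph, both sides of the reduced identity equal $\log\rbr{\sum_{d\ge0}(d!)^{m-1}x^d}$, which completes the proof. The main obstacle I anticipate is the bookkeeping of powers of $(q-1)$ under the substitution $t=x/(q-1)^{m-1}$ combined with the $q^{\binom d2}$ factors from $T^{-1}$: one needs to verify that these precisely conspire so that the $q=1$ specialization is well-defined (no poles) and yields exactly $(d!)^{m-1}$, which relies on the identity $q^{\binom d2}\prod_{i=1}^d(q^i-1)=\n{\GL_d(\bF_q)}$ together with $[d]_q^!\to d!$ as $q\to1$; everything else is formal manipulation of $\Exp$, $\Log$, $\Psi$ and known generating-function identities.
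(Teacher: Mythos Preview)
Your approach is the same as the paper's: rewrite the left side via Theorem~\ref{t12} and $\Psi\circ\Log=\log$, rewrite the right side via \eqref{stan1} and $\n{R_n}=(n!)^m$, and compare. The paper's proof just records the two identities
\[
\Psi\Bigl(\sum_{n\ge1}\tfrac{A_n^{\irr}(q)}{1-q}t^n\Bigr)=\log\bigl(T^{-1}F(t)^{-1}\bigr),
\qquad
\sum_{n\ge1}\tfrac{\n{J_n}}{n}x^n=\log\Bigl(\sum_{n\ge0}n!^{\,m-1}x^n\Bigr),
\]
and leaves the specialization to the reader; you are doing exactly this in more detail.

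There is one genuine gap. You write that ``the left-hand side, before specialization, equals $-\log(T^{-1}F(t)^{-1})=\log(T^{-1}F(t))$''. The second equality would require $(T^{-1}(F(t)^{-1}))^{-1}=T^{-1}(F(t))$, i.e.\ that $T^{-1}$ is multiplicative. It is not: $T(t^at^b)=q^{(1-m)\binom{a+b}{2}}t^{a+b}$ while $T(t^a)T(t^b)=q^{(1-m)(\binom a2+\binom b2)}t^{a+b}$, and $\binom{a+b}{2}-\binom a2-\binom b2=ab$. So this identity is false over $\bQ(q)\pser t$.

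The fix is to observe that the identity you need only has to hold \emph{after} the specialization $t=x/(q-1)^{m-1}$, $q=1$. At $q=1$ the factors $q^{(m-1)\binom d2}$ all become $1$, so $T^{-1}$ specializes to the identity operator; since each coefficient of $F$ and of $F^{-1}$ is divisible by $(q-1)^{d(m-1)}$, the substitution followed by $q\to1$ is well-defined term by term, and then indeed
\[
\bigl(T^{-1}(F^{-1})\bigr)\big|_{t=x/(q-1)^{m-1},\,q=1}=\Bigl(\sum_{d\ge0}(d!)^{m-1}x^d\Bigr)^{-1}
=\Bigl(\bigl(T^{-1}F\bigr)\big|_{t=x/(q-1)^{m-1},\,q=1}\Bigr)^{-1}.
\]
With this said explicitly, your computation of $\log\bigl(\sum_{d\ge0}(d!)^{m-1}x^d\bigr)$ on both sides goes through and the argument is complete. (As a matter of presentation, the middle paragraph could be tightened: the only fact you need is $[d]_q^!\to d!$ and $q^{\binom d2}\to1$ as $q\to1$.)
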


\begin{proof} This follows from
\[\Psi\rbr{\sum_{n\ge1}\frac{A_n^{\irr}(q)}{1-q}t^n}
=\log\rbr{T\inv\rbr{\sum_{n\ge0}[n]_q^{!(m-1)}((q-1)^{m-1}t)^n}\inv},\]
\[\sum_{n\ge1}\frac{\n{J_n}}{n}x^n=\log\rbr{\sum_{n\ge0}n!^{m-1}x^n}.\]
\end{proof}

An explanation for this relation could be that in order to pass from absolutely irreducible representations to all irreducible representations we have to apply Adams operations as in \cite[Lemma 3.2]{mozgovoy_number}.



\begin{thebibliography}{10}

\bibitem{bridgeland_introduction}
Tom Bridgeland, \emph{{A}n introduction to motivic {H}all algebras}, Adv. Math.
  \textbf{229} (2012), no.~1, 102--138,
  \href{http://arxiv.org/abs/1002.4372}{{\ttfamily arXiv:1002.4372}}.

\bibitem{butler_number}
Steve Butler, \emph{{T}he number of ordered tuples with no global factor},
  2006, Preprint.

\bibitem{cavazos_e}
Samuel Cavazos and Sean Lawton, \emph{${E}$-polynomial of
  $\mathrm{SL}_2(\mathbb{C})$-{C}haracter {V}arieties of {F}ree groups}, 2014,
  \href{http://arxiv.org/abs/1401.0228}{{\ttfamily arXiv:1401.0228}}.

\bibitem{florentino_topology}
Carlos Florentino and Sean Lawton, \emph{{T}he topology of moduli spaces of
  free group representations}, Math. Ann. \textbf{345} (2009), no.~2, 453--489.

\bibitem{florentino_singularities}
\bysame, \emph{{S}ingularities of free group character varieties}, Pacific J.
  Math. \textbf{260} (2012), no.~1, 149--179.

\bibitem{hall_subgroups}
Marshall Hall, Jr., \emph{{S}ubgroups of finite index in free groups}, Canadian
  J. Math. \textbf{1} (1949), 187--190.

\bibitem{hausel_mixed}
Tam{\'a}s Hausel and Fernando Rodriguez-Villegas, \emph{{M}ixed {H}odge
  polynomials of character varieties}, Invent. Math. \textbf{174} (2008),
  no.~3, 555--624, \href{http://arxiv.org/abs/math/0612668}{{\ttfamily
  arXiv:math/0612668}}, With an appendix by Nicholas M. Katz.

\bibitem{hua_counting}
Jiuzhao Hua, \emph{{C}ounting representations of quivers over finite fields},
  J. Algebra \textbf{226} (2000), no.~2, 1011--1033.

\bibitem{lopezpena_mapping} Javier L{\'o}pez Pe{\~n}a and Oliver Lorscheid, \emph{Mapping {$\mathbb F_1$}-land: an overview of geometries over
              the field with one element}, Noncommutative geometry, arithmetic, and related topics, 241--265, Johns Hopkins Univ. Press, Baltimore, 2011.

\bibitem{lubotzky_subgroup} Alexander Lubotzky and Dan Segal, \emph{Subgroup growth}, Progress in Mathematics \textbf{212}, Birkh\"auser, Basel 2003.

\bibitem{macdonald_symmetric}
I.~G. Macdonald, \emph{{S}ymmetric functions and {H}all polynomials}, second
  ed., Oxford Mathematical Monographs, Oxford University Press, 1995, With
  contributions by A. Zelevinsky.

\bibitem{mozgovoy_computational}
Sergey Mozgovoy, \emph{{A} computational criterion for the {K}ac conjecture},
  J. Algebra \textbf{318} (2007), no.~2, 669--679,
  \href{http://arxiv.org/abs/math/0608321}{{\ttfamily arXiv:math/0608321}}.

\bibitem{mozgovoy_motivicb}
\bysame, \emph{{M}otivic {D}onaldson-{T}homas invariants and {M}c{K}ay
  correspondence}, 2011, \href{http://arxiv.org/abs/1107.6044}{{\ttfamily
  arXiv:1107.6044}}.

\bibitem{mozgovoy_number}
Sergey Mozgovoy and Markus Reineke, \emph{{O}n the number of stable quiver
  representations over finite fields}, J. Pure Appl. Algebra \textbf{213}
  (2009), no.~4, 430--439, \href{http://arxiv.org/abs/0708.1259}{{\ttfamily
  arXiv:0708.1259}}.

\bibitem{seshadri_geometric}
C.~S. Seshadri, \emph{{G}eometric reductivity over arbitrary base}, Advances in
  Math. \textbf{26} (1977), no.~3, 225--274.

\bibitem{stanley_enumerative2}
Richard~P. Stanley, \emph{{E}numerative combinatorics. {V}ol. 2}, Cambridge
  Studies in Advanced Mathematics, vol.~62, Cambridge University Press,
  Cambridge, 1999, With a foreword by Gian-Carlo Rota and appendix 1 by Sergey
  Fomin.

\end{thebibliography}
 
\providecommand{\bysame}{\leavevmode\hbox to3em{\hrulefill}\thinspace}
\providecommand{\href}[2]{#2}

\end{document}